\theoremstyle{plain}
\newtheorem{theorem}{Theorem}[section]
\newtheorem{lemma}[theorem]{Lemma}
\newtheorem{proposition}[theorem]{Proposition}
\newtheorem{corollary}[theorem]{Corollary}
\theoremstyle{definition}
\newtheorem{definition}[theorem]{Definition}
\newtheorem{example}[theorem]{Example}
\theoremstyle{remark}
\newtheorem{remark}[theorem]{Remark}
\numberwithin{equation}{section}
\newlength{\struh}
\newlength{\textminustop}
\newcommand{\tria}{\triangle_{n}}
\newcommand{\poly}{\mathbb D \times \mathbb D^{n-1}_*}
\newcommand*{\Ge}{\geqslant}
\newcommand*{\Le}{\leqslant}
\newcommand{\ncom}{\newcommand}
\ncom{\bq}{\begin{equation}}
\ncom{\eq}{\end{equation}}
\ncom{\beqn}{\begin{eqnarray*}}
\ncom{\eeqn}{\end{eqnarray*}}
\ncom{\beq}{\begin{eqnarray}}
\ncom{\eeq}{\end{eqnarray}}
\ncom{\nno}{\nonumber}
\ncom{\rar}{\rightarrow}
\ncom{\Rar}{\Rightarrow}
\ncom{\noin}{\noindent}
\ncom{\bc}{\begin{centre}}
\ncom{\ec}{\end{centre}}
\ncom{\sz}{\scriptsize}
\ncom{\rf}{\ref}
\ncom{\sgm}{\sigma}
\ncom{\Sgm}{\Sigma}
\ncom{\dt}{\delta}
\ncom{\Dt}{Delta}
\ncom{\lmd}{\lambda}
\ncom{\Lmd}{\Lambda}
\ncom{\eps}{\epsilon}
\ncom{\pcc}{\stackrel{P}{>}}
\ncom{\dist}{{\rm\,dist}}
\ncom{\sspan}{{\rm\,span}}
\ncom{\im}{{\rm Im\,}}
\ncom{\sgn}{{\rm sgn\,}}
\ncom{\ba}{\begin{array}}
\ncom{\ea}{\end{array}}
\ncom{\eop}{\hfill{{\rule{2.5mm}{2.5mm}}}}
\ncom{\eoe}{\hfill{{\rule{1.5mm}{1.5mm}}}}
\ncom{\eof}{\hfill{{\rule{1.5mm}{1.5mm}}}}
\ncom{\hone}{\mbox{\hspace{1em}}}
\ncom{\htwo}{\mbox{\hspace{2em}}}
\ncom{\hthree}{\mbox{\hspace{3em}}}
\ncom{\hfour}{\mbox{\hspace{4em}}}
\ncom{\hsev}{\mbox{\hspace{7em}}}
\ncom{\vone}{\vskip 2ex}
\ncom{\vtwo}{\vskip 4ex}
\ncom{\vonee}{\vskip 1.5ex}
\ncom{\vthree}{\vskip 6ex}
\ncom{\vfour}{\vspace*{8ex}}
\ncom{\norm}{\|\;\;\|}
\ncom{\integ}[4]{\int_{#1}^{#2}\,{#3}\,d{#4}}
\ncom{\inp}[2]{\langle{#1},\,{#2} \rangle}
\ncom{\Inp}[2]{\Langle{#1},\,{#2} \Langle}
\ncom{\vspan}[1]{{{\rm\,span}\#1 \}}}
\ncom{\dm}[1]{\displaystyle {#1}}
\keywords{Toeplitz operators, Hardy spaces, Hartogs triangle}
\subjclass[2020]{Primary 47B35, 30H10; Secondary 32Q02}
\thanks{The second author was supported by NBHM Grant 0204-9-2022-R$\&$D-II-5885.}
\begin{document}
\title[Toeplitz operators on the $n$-dimensional Hartogs triangle]{Toeplitz operators on the $n$-dimensional \\ Hartogs triangle}

\author[S. Jain]{Shubham Jain} 
\author[P. Pramanick]{Paramita Pramanick}

\address{Department of Mathematics and Statistics\\
Indian Institute of Technology Kanpur, India}

\email{shubjain@iitk.ac.in, shubhamjain4343@gmail.com}
\email{paramitapramanick@gmail.com}

\begin{abstract} We formally introduce and study Toeplitz operators on the Hardy space of the $n$-dimensional Hartogs triangle. We find a precise relation between these operators  and the Toeplitz operators on the Hardy space of the unit polydisc $\mathbb D^n.$  As an application, we deduce several properties of these operators from their polydisc counterparts. Furthermore, we show that certain results achieved for the Toeplitz operators on the $n$-dimensional Hartogs triangle are not the same as those in the polydisc case.
\end{abstract}
%\begin{abstract} We formally introduce and study Toeplitz operators $T_{\phi, \triangle_n}$ on $H^2(\tria),$  $\phi \in L^{\infty}(\mathbb T^n),$ where $\tria, n \Ge 2$ denotes the $n$-dimensional Hartogs triangle.  
%In particular,  we decipher a precise relation between $T_{\phi, \triangle_n}$  and the Toeplitz operators on $H^2(\mathbb D^n).$  As an application, we discuss several properties of $T_{\phi, \triangle_n}$ from their polydisc counterparts. 
%\end{abstract}

\maketitle

\section{Introduction}
Let $\mathbb N, \mathbb Z_+, \mathbb Z$ and $\mathbb R$ denote the set of positive integers, nonnegative integers, integers, and real numbers, respectively. Let $\mathbb C$ and $\mathbb C_*$ denote the set of complex numbers and nonzero complex numbers, respectively. The symbols $\mathbb T$ and $\mathbb D$ represent the unit circle and unit disc in the complex plane $\mathbb C$. We use the notation $\mathbb D_*$ for $\mathbb D \setminus \{0\}.$ For $n \in \mathbb N$ and a nonempty set $A,$ let $A^n$ denote the $n$-fold Cartesian product of $A$ with itself. For $z =(z_1, \ldots, z_n) \in \mathbb C^n$ and $\alpha = (\alpha_1, \ldots, \alpha_n) \in \mathbb Z^n_+,$ the expression $z^\alpha$ represents the complex number $\prod_{j=1}^n z^{\alpha_j}_j.$ Let $\varepsilon_j$ denote the $n$-tuple in $\mathbb C^n$ with $j$-th entry equal to $1$ and zeros elsewhere.

Let $n \Ge 2$ be a positive integer. The domain $n$-dimensional Hartogs triangle, denoted by $\tria,$ is defined as 
\beqn
\tria :=  \{z \in \mathbb C^n : |z_{1}|  < |z_2| < \ldots < |z_n| < 1\}.
\eeqn
Note that $\tria$ is a bounded and pseudoconvex domain. However, it is neither monomially convex nor polynomially convex.
Consider the biholomorphism $\varphi$ from $\tria$ onto $\poly$ given by the map \beq \label{phi-map} \varphi(z)=\left(\frac{z_1}{z_2}, \ldots, \frac{z_{n-1}}{z_n}, z_n \right), \quad z=(z_1, \ldots, z_n) \in \tria.  \eeq
Note that the Jacobian $J_{\varphi}$ of $\varphi$ is given by
\beq \label{Jacobian phi} J_{\varphi}(z)=\frac{1}{z_2\cdots z_n}, \quad z=(z_1, \ldots, z_n) \in \tria. \eeq
Further, $\varphi^{-1}$ is given by
\allowdisplaybreaks
\beq \label{phi inverse}
\varphi^{-1}(z) = \Big(\prod_{j=1}^n z_j,  \ldots, \prod_{j=n-1}^n  z_j, z_n\Big), \quad z =(z_1, \ldots, z_n) \in \poly. 
\eeq
It is easy to see that the Jacobian $J_{_{\varphi^{-1}}}$ of $\varphi^{-1}$ is given by
\beqn
J_{_{\varphi^{-1}}}(z) = \prod_{j=2}^n z^{j-1}_j, \quad z=(z_1, \ldots, z_n) \in \poly.
\eeqn 
For a bounded domain in $\mathbb C^n,$ let $\mathcal{O}(\Omega)$ denote the space of holomorphic functions on $\Omega$ and let $A(\Omega)$ denote the algebra of holomorphic functions on $\Omega$ that are continuous on $\overline{\Omega}.$ Let $H^{\infty}(\Omega)$ denote the space of bounded analytic functions on $\Omega.$ For a bounded linear operator $T$ on a separable complex Hilbert space $H,$ let $\operatorname{ran}(T)$ and $\operatorname{ker}(T)$ denote the range and kernel of $T,$ respectively.

Recall from \cite[Section 7]{CJP2023} (for $n=2,$ see \cite[Section~3]{M2021}) that the Hardy space $H^2(\tria)$ on $\tria$ is a reproducing kernel Hilbert space of holomorphic functions on $\tria$ endowed with the norm
\begin{align} \label{Norm formula-1}
%&& \|f\|^2 \\
\|f\|^2=  \sup_{\underset{r_j \in (0, 1)}{j=1, \ldots, n}} \int_{[0, 2\pi]^n}  \!\Big|f\Big(\prod_{j=1}^n r_j e^{i \theta_1}, \prod_{j=2}^n r_j e^{i \theta_2}, \ldots, r_n e^{i \theta_n}\Big)\Big|^2\prod_{j=1}^n r^{2j-1}_j \frac{d\theta}{(2\pi)^n}, 
  %~  f \in \mathscr H^2(\triangle^{\!n}_{_{0}}), 
\end{align}
where $d\theta$ denotes the Lebesgue measure on $[0, 2\pi]^n$ and $f \in H^2(\tria).$
Also, recall that the Hardy space $H^2(\mathbb D^n)$ on the unit polydisc $\mathbb D^n$ is defined as 
\beqn
H^2(\mathbb D^n):=\left\{f \in \mathcal{O}(\mathbb D^n):\sup_{t \in [0,1)}\int_{\mathbb T^n}|f(tz_1, \ldots, tz_n)|^2dm(z) < \infty \right \},
\eeqn
where $dm(z)$ is the normalized Lebesgue measure on $\mathbb T^n.$ 
\begin{remark} \label{Hardy space polydisc}
    The space $H^2(\mathbb D^n)$ may be identified with the closed subspace $H^2(\mathbb T^n)$ of $L^2(\mathbb T^n)$ (see \cite[Theorem~3.4.3]{R1969}). \hfill $\diamondsuit$
\end{remark}
Here is the outline of the paper. In Section~\ref{2}, we provide an alternative expression for the norm on $H^2(\tria),$ see Proposition \ref{norm2}. Further, the Hardy space $H^2(\tria)$ is identified with a closed subspace of $L^2(\mathbb T^n),$ see Proposition \ref{Boundary values identification}. As a consequence, we define inner functions on $\tria,$ see Definition \ref{def-inner}. In Section~\ref{3}, we define the Toeplitz operator $T_{\phi, \triangle_n}$ on $H^2(\tria)$ and study some of its basic properties. In section \ref{4}, we present the main theorem of this paper, see Theorem \ref{Toeplitz- relation}, which provides a concrete relation between $T_{\phi, \triangle_n}$  and the Toeplitz operators on $H^2(\mathbb D^n).$ In the final section, we obtain several applications of the
Theorem \ref{Toeplitz- relation}. These include a characterization of Toeplitz operators defined on the Hardy space $H^2(\tria)$ similar to the familiar Brown-Halmos criterion for Toeplitz operators defined on the Hardy space $H^2(\mathbb D),$ see Theorem \ref{Brown-Halmos}. We also provide a necessary and sufficient condition for $T_{\phi, \triangle_n}$ to be a partial isometry, see Theorem \ref{Partial isometry result}. Furthermore, we discuss analytic Toeplitz operators on $H^2(\tria).$

\section{Hardy space and boundary values} \label{2}
In this section, we establish a correspondence between the Hardy space $H^2(\tria)$ and a closed subspace of $L^2(\mathbb T^n)$ (see Proposition~\ref{Boundary values identification}; for $n=2,$ see \cite[p.~15]{M2021}). This naturally leads us to the notion of the inner function of a member of $H^2(\tria)$ (see Definition~\ref{def-inner}). 

Recall that 
 the {\it distinguished boundary} $\partial_{d}(\Omega)$ of a domain $\Omega$ is the smallest closed subset of the boundary of $\Omega$ such that for any $f \in A(\Omega),$
 \beqn \displaystyle |f(z)| \Le \sup_{w \in \partial_{d}(\Omega)}{|f(w)|}, \quad z \in \Omega.
 \eeqn

We begin with a useful relationship between the distinguished boundary of a domain $D\subseteq \Omega$ with that of the distinguished boundary of the domain $\Omega$. We include a proof for the sake of completeness. 
 
\begin{lemma} \label{silov boundary}
Let $\Omega$ be a bounded domain in $\mathbb{C}^n$ with a distinguished boundary $\partial_{d}(\Omega)$ and let $D$ be a domain contained in $\Omega$ such that $\partial_{d}(\Omega)$ is a subset of the boundary of $D.$ If there exists a continuous map $\Phi: \overline{\Omega} \rightarrow \overline{D}$ such that $\Phi$ is holomorphic on $\Omega$ and satisfies $\Phi(\Omega) = D$ and $\Phi(\partial_{d}(\Omega)) = \partial_{d}(\Omega),$ then the distinguished boundary $\partial_{d}(D)$ of $D$ is equal to $\partial_{d}(\Omega).$
\end{lemma}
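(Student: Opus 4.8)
The plan is to establish the two inclusions $\partial_{d}(D) \subseteq \partial_{d}(\Omega)$ and $\partial_{d}(\Omega) \subseteq \partial_{d}(D)$ separately, each time exploiting the minimality built into the definition of the distinguished boundary: if $S$ is a closed subset of the boundary of a domain $G$ on which $|f|$ is dominated by its supremum for every $f \in A(G)$, then $\partial_{d}(G) \subseteq S$. The order of the two inclusions matters, since the second will rely on the first.

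First I would prove $\partial_{d}(D) \subseteq \partial_{d}(\Omega)$ by checking that $\partial_{d}(\Omega)$ is a closed subset of the boundary of $D$ which dominates $A(D)$. Closedness is automatic, and $\partial_{d}(\Omega) \subseteq \partial D$ is a hypothesis. For the domination, fix $g \in A(D)$ and form $g \circ \Phi$. Since $\Phi$ is continuous on $\overline{\Omega}$, holomorphic on $\Omega$ with $\Phi(\Omega)=D$, and $g$ is continuous on $\overline{D}$ and holomorphic on $D$, the composite $g \circ \Phi$ lies in $A(\Omega)$. Applying the distinguished-boundary inequality for $\Omega$ to $g \circ \Phi$, then using $\Phi(\partial_{d}(\Omega)) = \partial_{d}(\Omega)$ to rewrite the right-hand supremum as $\sup_{w \in \partial_{d}(\Omega)} |g(w)|$, and finally using $\Phi(\Omega)=D$ so that $\Phi(z)$ exhausts $D$ as $z$ ranges over $\Omega$, yields $|g(\zeta)| \le \sup_{w \in \partial_{d}(\Omega)} |g(w)|$ for every $\zeta \in D$. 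Minimality of $\partial_{d}(D)$ then gives the inclusion.

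Next I would prove $\partial_{d}(\Omega) \subseteq \partial_{d}(D)$ by showing that $\partial_{d}(D)$ dominates $A(\Omega)$. Given $f \in A(\Omega)$, its restriction to $\overline{D}$ belongs to $A(D)$, so the distinguished-boundary inequality for $D$ bounds $|f|$ on $D$ by $\sup_{\partial_{d}(D)} |f|$; by continuity of $f$ on $\overline{D}$ this bound persists on all of $\overline{D}$, hence on $\partial_{d}(\Omega) \subseteq \partial D$, giving $\sup_{\partial_{d}(\Omega)} |f| \le \sup_{\partial_{d}(D)} |f|$. Combining this with the distinguished-boundary inequality for $\Omega$ itself produces $|f(z)| \le \sup_{\partial_{d}(\Omega)} |f| \le \sup_{\partial_{d}(D)} |f|$ for all $z \in \Omega$. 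Since $\partial_{d}(D)$ is closed and, by the first inclusion, satisfies $\partial_{d}(D) \subseteq \partial_{d}(\Omega) \subseteq \partial \Omega$, minimality of $\partial_{d}(\Omega)$ forces $\partial_{d}(\Omega) \subseteq \partial_{d}(D)$, and the two inclusions together give equality.

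The routine parts are verifying that composition ($g \circ \Phi$) and restriction ($f|_{\overline{D}}$) keep functions inside the relevant algebras $A(\Omega)$ and $A(D)$. I expect the main obstacle to be the second inclusion: the temptation is to transport $f$ through $\Phi$, but $\Phi$ only controls $f$ on $D = \Phi(\Omega)$, not on all of $\Omega$. The correct idea is instead to use that $\partial_{d}(\Omega)$ actually lies on $\partial D$, so the continuity of $f$ up to $\overline{D}$ transfers the $D$-bound to the points of $\partial_{d}(\Omega)$. A second subtle point is that applying minimality for $\Omega$ requires $\partial_{d}(D) \subseteq \partial \Omega$, which is not automatic because $\partial D$ may meet the interior of $\Omega$; this is exactly what the first inclusion supplies, which is why it must be proved first.
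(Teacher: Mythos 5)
Your proof is correct. The first inclusion, $\partial_{d}(D) \subseteq \partial_{d}(\Omega)$, is argued exactly as in the paper: compose $g \in A(D)$ with $\Phi$ to land in $A(\Omega)$ and transport the dominating inequality through $\Phi(\Omega)=D$ and $\Phi(\partial_{d}(\Omega))=\partial_{d}(\Omega)$. For the reverse inclusion, however, you take a genuinely different route. The paper invokes, for each $\eta \in \partial_{d}(\Omega)$, a peak function $f_{\eta} \in A(\Omega)$ with $f_{\eta}(\eta)=1$ and $|f_{\eta}|<1$ on $\overline{\Omega}\setminus\{\eta\}$, and observes that $f_{\eta}|_{D} \in A(D)$ forces $\eta \in \partial_{d}(D)$; this is short but rests on the unproved assertion that every point of the distinguished boundary admits such a peak function (in general only density of peak points in the Shilov boundary is guaranteed). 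You instead show directly that $\partial_{d}(D)$ is a closed dominating set for $A(\Omega)$ --- restricting $f \in A(\Omega)$ to $\overline{D}$, bounding it on $D$ by its supremum over $\partial_{d}(D)$, and pushing that bound by continuity to $\partial_{d}(\Omega) \subseteq \partial D$ --- and then appeal to minimality of $\partial_{d}(\Omega)$. You are also right to flag, and to supply, the step the paper leaves implicit: minimality of $\partial_{d}(\Omega)$ only applies to closed subsets of $\partial\Omega$, and the containment $\partial_{d}(D) \subseteq \partial\Omega$ is exactly what the first inclusion provides, so the order of the two halves matters. Your argument uses nothing beyond the definition of the distinguished boundary and is, in that sense, more self-contained than the paper's.
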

\begin{proof}
    Let $f \in A(D).$ Then $f \circ \Phi \in A(\Omega).$ Since $\partial_{d}(\Omega)$ is the distinguished boundary of $\Omega,$ we have 
    \beqn
     |f \circ \Phi(z)| \Le \sup_{w \in \partial_{d}(\Omega)}|f \circ \Phi(w)|, \quad z \in \Omega.
    \eeqn
Consequently, we have $ \displaystyle |f(z)| \Le \sup_{w \in \partial_{d}(\Omega)}|f(w)|$ for any $z \in D.$
    
On the other hand, for $\eta \in \partial_{d}(\Omega),$ there exists a function $f_{\eta} \in A(\Omega)$ such that $f_{\eta}(\eta)=1$ and $|f_{\eta}(z)|<1$ for all $z \in \overline \Omega \setminus\{\eta\}.$ Since $f_{\eta}|_{D}$ is in $A(D),$ this completes the proof.\end{proof}
\begin{remark} \label{Distinguishd boundary same}
 The distinguished boundary of $\tria$ is $\mathbb{T}^n.$ To see this, note that $\tria \subset \mathbb D^n$ and the distinguished boundary $\mathbb T^n$ of $\mathbb D^n$ is a subset of $\overline{\tria}.$ Also, note that $\varphi^{-1}$ extends to be a continuous map from $\overline{\mathbb D}^n$ to $\overline{\tria}$ such that $\varphi^{-1}$ is holomorphic on $\mathbb D^n,$ $\varphi^{-1}(\mathbb D^n) =\tria$ and $\varphi^{-1}(\mathbb T^n) = \mathbb T^n$ (see \eqref{phi inverse}). We now apply Lemma~\ref{silov boundary} to get the desired conclusion. \hfill $\diamondsuit$
\end{remark}

For the $n$-tuple ${\bf r}=(r_1, \ldots r_n)$ of real numbers in $I:=(0,1),$ the domain $\triangle^{\!n}_{{\bf r}}$ is given by 
\beqn
\triangle^{\!n}_{{\bf r}} =
\Big\{ z \in \mathbb C \times \mathbb C^{n-1}_* : |z_1| < r_1|z_2| < \ldots < \Big(\prod_{k=1}^{n-1} r_k\Big) |z_n| < \prod_{k=1}^{n} r_k\Big\}. 
\eeqn
Note that $\triangle^{\!n}_{{\bf r}} \subseteq \tria.$

To prove Proposition~\ref{Boundary values identification}, we need an expression for the norm on $H^2(\tria)$ involving boundary values. 
\begin{proposition}\label{norm2}
 The norm on the Hardy space $H^2(\tria)$ can be expressed as
\beq \label{Norm formula-2} \|f\|^2= \sup_{{\bf r} \in  I^n } \frac{1}{2^n\pi^n}\int_{\partial_{d}(\triangle^{\!n}_{{\bf r}})} |f|^2 d\sigma_{\bf r}, \quad f \in H^2(\tria). 
\eeq
Here $\sigma_{\bf r}$ denotes the surface area measure on $\partial_{d}(\triangle^{\!n}_{{\bf r}})$ induced by the Lebesgue measure on $\mathbb T^n.$   
\end{proposition}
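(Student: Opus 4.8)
The plan is to deduce \eqref{Norm formula-2} directly from the already–available formula \eqref{Norm formula-1} by a change of variables, so that the only substantive tasks are to identify the set $\partial_{d}(\triangle^{\!n}_{{\bf r}})$ and to compute the density of $\sigma_{\bf r}$ against the angular measure. Since $2^n\pi^n=(2\pi)^n$ and \eqref{Norm formula-1} already writes $\|f\|^2$ as a supremum over ${\bf r}\in I^n$, it suffices to establish, for each fixed ${\bf r}$, the pointwise-in-${\bf r}$ identity
\[
\int_{\partial_{d}(\triangle^{\!n}_{{\bf r}})}|f|^2\,d\sigma_{\bf r}
=\int_{[0,2\pi]^n}\Big|f\Big(\textstyle\prod_{j=1}^n r_j\,e^{i\theta_1},\prod_{j=2}^n r_j\,e^{i\theta_2},\dots,r_n e^{i\theta_n}\Big)\Big|^2\,\prod_{j=1}^n r_j^{2j-1}\,d\theta,
\]
and then to take the supremum over ${\bf r}$; the right-hand side is exactly the unnormalised integrand of \eqref{Norm formula-1}.

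First I would identify the distinguished boundary. From the defining inequalities one checks that $z\in\triangle^{\!n}_{{\bf r}}$ if and only if $|z_k/z_{k+1}|<r_k$ for $k=1,\dots,n-1$ and $|z_n|<r_n$; hence the biholomorphism $\varphi$ of \eqref{phi-map} carries $\triangle^{\!n}_{{\bf r}}$ onto the polydisc $r_1\mathbb D\times\cdots\times r_n\mathbb D$ (the punctures appearing in the intermediate coordinates are interior and do not affect the distinguished boundary). As in Remark~\ref{Distinguishd boundary same}, $\varphi$ extends to a homeomorphism of the closures that is biholomorphic inside, so it carries the distinguished boundary of $\triangle^{\!n}_{{\bf r}}$ onto that of the polydisc, namely $r_1\mathbb T\times\cdots\times r_n\mathbb T$. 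Using the explicit inverse \eqref{phi inverse} this gives
\[
\partial_{d}(\triangle^{\!n}_{{\bf r}})=\varphi^{-1}\big(r_1\mathbb T\times\cdots\times r_n\mathbb T\big)=\Big\{z:\ |z_k|=\rho_k:=\textstyle\prod_{j=k}^n r_j,\ k=1,\dots,n\Big\},
\]
a real $n$-torus; moreover the angular map $\theta\mapsto(\rho_1 e^{i\theta_1},\dots,\rho_n e^{i\theta_n})$ is precisely its parametrisation, so the points appearing in \eqref{Norm formula-1} are exactly the points of $\partial_{d}(\triangle^{\!n}_{{\bf r}})$.

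The main step, and the step I expect to be the main obstacle, is to show that under this parametrisation $\sigma_{\bf r}$ pulls back to $\prod_{j=1}^n r_j^{2j-1}\,d\theta$. Writing the parametrisation as $\Lambda_{\bf r}(\zeta)=\varphi^{-1}(r_1\zeta_1,\dots,r_n\zeta_n)$ for $\zeta\in\mathbb T^n$ and tracking how the Lebesgue (arc-length) measure of $\mathbb T^n$ is transported to $\partial_{d}(\triangle^{\!n}_{{\bf r}})$ through $\varphi^{-1}$, three Jacobian contributions must be combined: the angular substitution $\zeta_k=e^{i\alpha_k}\mapsto\theta_k=\sum_{j\ge k}\alpha_j$ dictated by \eqref{phi inverse} is unipotent, hence has determinant one and gives $d\theta=d\alpha$; the arc-length factors of the circles of radii $r_k$ contribute $\prod_{k=1}^n r_k$; and the holomorphic Jacobian of $\varphi^{-1}$ contributes $|J_{\varphi^{-1}}|^2=\prod_{j=2}^n r_j^{2(j-1)}$ on the torus. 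The only genuine computation is then the exponent bookkeeping
\[
\prod_{k=1}^n r_k\cdot\prod_{j=2}^n r_j^{2(j-1)}=\prod_{j=1}^n r_j^{2j-1},
\]
which is immediate on counting powers. Substituting the resulting pullback into the first display, dividing by $2^n\pi^n=(2\pi)^n$, and taking the supremum over ${\bf r}\in I^n$ yields \eqref{Norm formula-2} from \eqref{Norm formula-1}.

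I expect the entire subtlety to lie in fixing conventions so that each Jacobian factor is counted exactly once. In particular one must not replace $\sigma_{\bf r}$ by the intrinsic Hausdorff (geometric surface) measure of the image torus $\partial_{d}(\triangle^{\!n}_{{\bf r}})$, which equals $\prod_{k=1}^n\rho_k\,d\theta=\prod_{j=1}^n r_j^{\,j}\,d\theta$ and would be off by precisely the factor $|J_{\varphi^{-1}}|=\prod_{j=2}^n r_j^{\,j-1}$ produced by transporting the $\mathbb T^n$–measure through the biholomorphism $\varphi$. Getting this single extra power of $|J_{\varphi^{-1}}|$ right, consistently with \eqref{Jacobian phi} and \eqref{phi inverse}, is the heart of the matter; once it is in place the proposition follows by the change of variables described above.
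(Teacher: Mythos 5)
Your proposal follows the same route as the paper's proof: identify $\partial_{d}(\triangle^{\!n}_{{\bf r}})$ as the torus $\{z : |z_j|=\prod_{k=j}^n r_k\}$ by transporting the distinguished boundary of a polydisc through the biholomorphism $\varphi$ (the paper does this by invoking Lemma~\ref{silov boundary}), and then match the resulting torus integral with \eqref{Norm formula-1}. The only difference is that the paper dismisses the second step as ``easy to see,'' whereas you supply the Jacobian bookkeeping that pins down the density $\prod_{j=1}^n r_j^{2j-1}$ of $\sigma_{\bf r}$ against $d\theta$, including the correct warning that the intrinsic Hausdorff measure of the image torus would be off by the factor $|J_{\varphi^{-1}}|$.
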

\begin{proof}
    By Lemma~\ref{silov boundary}, the distinguished boundary $\partial_{d}(\triangle^{\!n}_{{\bf r}})$ of $\triangle^{\!n}_{{\bf r}}$ is given by
\beqn
\partial_{d}(\triangle^{\!n}_{{\bf r}}) = \{(z_1, \ldots, z_n)\in \mathbb{C}^n : |z_j|=\prod_{k=j}^n r_k,~ j=1, \ldots, n\}.
\eeqn
It is now easy to see that norms given in \eqref{Norm formula-1} and \eqref{Norm formula-2} coincide.
\end{proof}
Let $\mathcal{I}$ be the set given by $\{\alpha \in \mathbb Z^n : \sum_{j=1}^k \alpha_j +k-1 \Ge 0,~ k=1, \ldots, n\}.$ From \eqref{Norm formula-2}, It turns out that an orthonormal basis for $H^2(\tria)$ is $\{z^{\alpha} :\alpha \in \mathcal{I}\}.$ Furthermore, each function $f \in H^2(\tria)$ has the Laurent series representation as  $\displaystyle f(z)=\sum_{\alpha \in \mathcal{I}}a_{\alpha}z^{\alpha}$ for some scalars $a_{\alpha}.$

 Let $\zeta=(\zeta_1, \ldots, \zeta_n) \in \mathbb T^n$ and let $H^2(\partial_{d}(\tria))$ denote the closed subspace of $L^2(\partial_{d}(\tria))$ defined as \beqn H^2(\partial_{d}(\tria)) := \left \{ f(\zeta)=\sum_{\alpha \in \mathcal {I}}a_{\alpha}\zeta^{\alpha}:\|\{a_{\alpha}\}\|_{\ell ^2}<\infty \right \}.
\eeqn
Let $f \in H^2(\tria).$ We associate a function $\Tilde{f} \in L^2(\partial_{d}(\tria))$ to $f$ defined as
\beq \label{isometric isomorphism}
\tilde{f}(\zeta):=\sum_{\alpha \in \mathcal {I}}a_{\alpha}\zeta^{\alpha},\quad \zeta \in \mathbb T^n.
\eeq
For ${\bf r} \in I^n,$ set
\beqn
f_{\bf r}(\zeta_1, \ldots, \zeta_n):=f\left(\prod_{k=1}^n r_k \zeta_1, \prod_{k=2}^n r_k \zeta_2 \ldots, r_n \zeta_n\right).
\eeqn
Since
\beqn
\|f-f_{\bf r}\|^2_{_{L^2(\partial_{d}(\tria))}}= \sum_{\alpha \in \mathcal{I}}|a_{\alpha}|^2 \left(1 - \prod_{k=1}^n r_k^{\sum_{j=1}^k \alpha_j}\right)^2, 
\eeqn
by an application of the dominated convergence theorem, we have
\beqn
\|f-f_{\bf r}\|^2_{_{L^2(\partial_{d}(\tria))}} \rightarrow 0 \mbox{~as~} r_1, \ldots, r_n \rightarrow 1.
\eeqn
Hence, the function $\Tilde{f}$ is the boundary value function for $f.$ On the other hand, any function in the space $H^2(\partial_{d}(\tria))$ can be represented in the form given by \eqref{isometric isomorphism}. Consequently, it can be extended to become a function in the space $H^2(\tria).$ The above discussion yields the following.
\begin{proposition}  \label{Boundary values identification}
The Hardy space $H^2(\tria)$ is isometrically isomorphic to $H^2(\partial_{d}(\tria)).$
\end{proposition}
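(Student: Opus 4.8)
The plan is to realize the correspondence $f \mapsto \tilde f$ of \eqref{isometric isomorphism} as a genuine unitary by reducing both sides to their natural orthonormal bases indexed by the common set $\mathcal{I}$. First I would record the two structural facts that make this possible. On the one hand, Proposition~\ref{norm2} gives that $\{z^{\alpha} : \alpha \in \mathcal{I}\}$ is an orthonormal basis of $H^2(\tria)$, so every $f \in H^2(\tria)$ is $f = \sum_{\alpha \in \mathcal{I}} a_{\alpha} z^{\alpha}$ with $\|f\|^2 = \sum_{\alpha \in \mathcal{I}} |a_{\alpha}|^2 < \infty$. On the other hand, by Remark~\ref{Distinguishd boundary same} the distinguished boundary $\partial_{d}(\tria)$ equals $\mathbb{T}^n$, so that $L^2(\partial_{d}(\tria)) = L^2(\mathbb{T}^n)$; the monomials $\{\zeta^{\alpha} : \alpha \in \mathbb{Z}^n\}$ are exactly the Fourier characters of the torus and hence orthonormal there. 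In particular $\{\zeta^{\alpha} : \alpha \in \mathcal{I}\}$ is orthonormal in $L^2(\partial_{d}(\tria))$, and by the very definition of $H^2(\partial_{d}(\tria))$ it is an orthonormal basis of that space.

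Next I would define $U : H^2(\tria) \to H^2(\partial_{d}(\tria))$ by $Uf = \tilde f$ and check the three defining properties in turn. For well-definedness and isometry, take $f = \sum_{\alpha \in \mathcal{I}} a_{\alpha} z^{\alpha}$; since $\{a_{\alpha}\} \in \ell^2$, the series $\tilde f = \sum_{\alpha \in \mathcal{I}} a_{\alpha} \zeta^{\alpha}$ lies in $H^2(\partial_{d}(\tria))$ and, by orthonormality of the $\zeta^{\alpha}$, satisfies $\|\tilde f\|_{L^2(\partial_{d}(\tria))}^2 = \sum_{\alpha \in \mathcal{I}} |a_{\alpha}|^2 = \|f\|^2$. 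Linearity of $U$ is immediate from \eqref{isometric isomorphism}. Surjectivity is built into the definition of $H^2(\partial_{d}(\tria))$: given $g = \sum_{\alpha \in \mathcal{I}} a_{\alpha} \zeta^{\alpha}$ with $\{a_{\alpha}\} \in \ell^2$, the series $\sum_{\alpha \in \mathcal{I}} a_{\alpha} z^{\alpha}$ converges in $H^2(\tria)$ to an $f$ with $Uf = g$. Thus $U$ is a linear isometric bijection, which is the assertion of the proposition.

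The one point that genuinely requires care is matching the two Hilbert-space structures along the index set $\mathcal{I}$: one must be certain that the measure on $\partial_{d}(\tria)$ underlying Proposition~\ref{norm2} (the factor $\tfrac{1}{2^n\pi^n}$ together with $\sigma_{\bf r}$, equivalently normalized Lebesgue measure on $\mathbb{T}^n$) normalizes the boundary monomials $\zeta^{\alpha}$ to unit length, so that the coefficient map is an honest isometry and not merely a topological isomorphism. This is exactly the compatibility already encoded in the passage from \eqref{Norm formula-1} to \eqref{Norm formula-2}. Everything else is auxiliary: the dominated-convergence estimate showing $\|f - f_{\bf r}\|_{L^2(\partial_{d}(\tria))} \to 0$ serves only to justify interpreting $\tilde f$ as the actual boundary value of $f$, and is not needed for the bare isometric-isomorphism statement.
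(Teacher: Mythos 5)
Your proposal is correct and follows essentially the same route as the paper: both identify $H^2(\tria)$ and $H^2(\partial_{d}(\tria))$ through the coefficient map $f \mapsto \tilde f$ along the common orthonormal basis indexed by $\mathcal{I}$, with the norm identity coming from Proposition~\ref{norm2}. Your observation that the dominated-convergence step is only needed to interpret $\tilde f$ as a genuine boundary value, and not for the isometric isomorphism itself, is an accurate reading of the paper's argument.
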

An application of Proposition~\ref{Boundary values identification} shows that a function $$f(z)=\sum_{\alpha \in \mathbb Z^n}a_{\alpha} \varphi(z)^{\alpha} \in L^2({\partial_{d}(\tria)})$$ is in $H^2(\tria)$ if and only if $a_{\alpha}=0$ whenever $\alpha \notin \mathbb Z^n_+,$ where $\varphi$ is as given in \eqref{phi-map}. 

Since the space $H^{\infty}(\tria)$ is contained in $H^2(\tria),$ with the aid of Proposition \ref{Boundary values identification}, we now define inner functions on $\tria.$
\begin{definition} \label{def-inner}
   A function $\theta \in H^{\infty}(\tria)$ is called {\it inner} if $\theta$ is unimodular on $\mathbb{T}^n.$
\end{definition} 
\begin{remark}
    If the boundary function of a function $f \in H^2(\mathbb D)$ is essentially bounded then $f$ is bounded on $\mathbb D$ (see \cite[Corollary~1.1.24]{MR2007}). Interestingly, this is not true for $H^2(\tria).$ In fact, the function $f(z)=\frac{1}{z_2 \cdots z_n} \in H^2(\tria)$ is unbounded, however, the associated boundary function is essentially bounded. \hfill $\diamondsuit$
\end{remark}
As the distinguished boundary of $\mathbb D^n$ and $\tria$ are the same (see Remark \ref{Distinguishd boundary same}), to avoid any confusion, we denote the Hardy space on $\mathbb D^n$ by the notation $H^{2}(\partial_{d}(\mathbb D^n)).$

\section{Toeplitz operators on $n$-dimensional Hartogs triangle} \label{3}
Let $L^{\infty}(\mathbb T^n)$ denote the standard $C^*$-algebra consisting of complex-valued Lebesgue measurable functions on ${\mathbb T}^n,$ which are essentially bounded. Let $\Omega=\mathbb D^n \mbox{~or~} \tria.$ Let $P_{ H^{2}(\partial_{d}(\Omega))}$ denote the orthogonal projection from $L^2(\mathbb T^n)$ onto $H^{2}(\partial_{d}(\Omega))$ (see Remark~\ref{Hardy space polydisc} and Proposition~\ref{Boundary values identification}).

For $\phi \in L^{\infty}(\mathbb T^n),$ the {\it Toeplitz operator} $T_{\phi, \Omega}$ corresponding to symbol $\phi$ is defined by
$$T_{\phi, \Omega}(f) = P_{ H^{2}(\partial_{d}(\Omega))}(\phi f),\,\, f\in H^{2}(\partial_{d}(\Omega)).$$
The operator $H_{\phi, \Omega} : H^2(\partial_{d}(\Omega)) \rightarrow L^2(\mathbb T^n) \ominus H^2(\partial_{d}(\Omega))$ defined by \beqn H_{\phi, \Omega}(f)=(I-P_{ H^{2}(\partial_{d}(\Omega))})(\phi f), \quad f \in H^2(\partial_{d}(\Omega)) \eeqn is called a {\it Hankel operator}. It is easy to see that the map $\Psi : L^2(\mathbb T^n) \rar L^2(\mathbb T^n)$ defined by \beq \label{Psi map} \Psi(f)=J_{\varphi^{-1}} \cdot f\circ \varphi^{-1}, \quad f \in L^2(\mathbb T^n)  \eeq
is a unitary map. Note that $\Psi^{-1} : L^2(\mathbb T^n) \rar L^2(\mathbb T^n) $ is given by 
\beq \label{Psi inverse}
\Psi^{-1}(f)= J_{\varphi} \cdot f \circ \varphi, \quad f \in L^2(\mathbb T^n).
\eeq
In view of Remark \ref{Hardy space polydisc} and Proposition \ref{Boundary values identification}, it is worth noting that the restriction map $\Psi : H^{2}(\partial_{d}(\tria)) \rightarrow H^{2}(\partial_{d}(\mathbb D^n))$ given by
\beq \label{Psi map-new} \Psi(f)=J_{\varphi^{-1}} \cdot f\circ \varphi^{-1}, \quad f \in H^{2}(\partial_{d}(\tria))
\eeq
is again a unitary map.
\begin{remark} \label{IFR}
 Note that $H^{\infty}(\mathbb D^n) \subsetneq H^{\infty}(\tria).$ However, it is worth noting that $\Psi^{-1}(H^{\infty}(\mathbb D^n)) \neq H^{\infty}(\tria).$ Indeed, $\Psi^{-1}(1)= J_{\varphi}$ which is not bounded on $\tria,$ where $J_{\varphi}$ is as given in \eqref{Jacobian phi}. On the other hand, an application of Riemann removable singularity theorem (see \cite[Theorem~4.2.1]{S2005}) yields that $H^{\infty}(\mathbb D^n) = H^{\infty}(\mathbb D \times \mathbb D_*^{n-1})$ and consequently, $\phi \in H^{\infty}(\tria)$ if and only if 
$\phi \circ \varphi^{-1} \in H^{\infty}(\mathbb D^n).$ \hfill $\diamondsuit$
\end{remark} Here, we record some algebraic properties of the Toeplitz operators on the Hardy space $H^2(\tria).$ Whenever the context is clear, we will use $P$ in place of $P_{ H^{2}(\partial_{d}(\tria))}.$
\begin{proposition} \label{Injective}
    Let $\phi, \psi \in L^{\infty}(\mathbb T^n).$ Then 
    \begin{itemize}
        \item [$\mathrm{(i)}$] $T_{\phi, \tria}^*$ = $T_{\overline{\phi}, \tria}.$
        % \item [$\mathrm{(ii)}$] \textcolor{red}{$\|T_{\phi}\|\leq \|T_{\phi}-K\|$ for every compact operator K on the Hardy space.}
        \item [$\mathrm{(ii)}$] $T_{\phi, \tria} T_{\psi, \tria}= T_{\phi \psi, \tria}$ if $\overline{\phi}$ or $\psi$ is analytic.
        \item [$\mathrm{(iii)}$] $T_{\phi, \tria} T_{\psi, \tria} -T_{\phi \psi, \tria}=-H^{*}_{\overline{\phi}, \tria} H_{\psi, \tria}.$
        \item [$\mathrm{(iv)}$] If $T_{\phi, \tria}$ is the zero operator on $H^2(\tria)$ then $\phi =0$ a.e.
    \end{itemize}
\end{proposition}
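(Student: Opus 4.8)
The plan is to read (i)--(iv) as the Hartogs-triangle analogues of the standard Brown--Halmos identities for Toeplitz operators, working throughout inside $L^2(\mathbb T^n)$ with $P := P_{H^2(\partial_{d}(\tria))}$ and exploiting two structural facts already available: $P$ is an orthogonal projection, so $P^*=P$ and $Pg=g$ for $g\in H^2(\partial_{d}(\tria))$; and $\{\zeta^\alpha:\alpha\in\mathcal{I}\}$ is an orthonormal basis of $H^2(\partial_{d}(\tria))$. For (i) I would compute, for $f,g\in H^2(\partial_{d}(\tria))$,
\[
\langle T_{\phi,\tria}f,g\rangle=\langle P(\phi f),g\rangle=\langle \phi f,g\rangle=\langle f,\overline{\phi}g\rangle=\langle f,P(\overline{\phi}g)\rangle=\langle f,T_{\overline{\phi},\tria}g\rangle,
\]
using $Pg=g$ and $P^*=P$ in the outer equalities, which gives $T_{\phi,\tria}^*=T_{\overline{\phi},\tria}$. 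For (iii) the same bookkeeping yields
\[
T_{\phi,\tria}T_{\psi,\tria}f-T_{\phi\psi,\tria}f=P\big(\phi(P-I)(\psi f)\big)=-P(\phi\,H_{\psi,\tria}f),
\]
so it remains to identify $P(\phi h)=H^*_{\overline{\phi},\tria}h$ for $h\in L^2(\mathbb T^n)\ominus H^2(\partial_{d}(\tria))$. This I would verify by pairing against an arbitrary $g\in H^2(\partial_{d}(\tria))$: $\langle H^*_{\overline\phi,\tria}h,g\rangle=\langle h,(I-P)(\overline\phi g)\rangle=\langle h,\overline\phi g\rangle=\langle P(\phi h),g\rangle$, where $(I-P)h=h$ is used in the middle step. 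Part (iii) follows, and the analytic cases of (ii) are then the situations in which the Hankel term drops out.

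For (ii) the crux is the multiplier property: if $\psi$ is analytic (the boundary function of some element of $H^{\infty}(\tria)$) then $\psi\cdot H^2(\partial_{d}(\tria))\subseteq H^2(\partial_{d}(\tria))$, whence $T_{\psi,\tria}f=\psi f$, $H_{\psi,\tria}=0$, and $T_{\phi,\tria}T_{\psi,\tria}f=P(\phi\psi f)=T_{\phi\psi,\tria}f$. I would justify this by observing that for $f\in H^2(\tria)$ the product $\psi f$ is holomorphic with $\|\psi f\|\Le \|\psi\|_{H^{\infty}(\tria)}\|f\|$ (the weighted integrals defining the norm only pick up the pointwise bound $\|\psi\|_{H^\infty}$), so $\psi f\in H^2(\tria)$; equivalently, the Fourier support of an $H^{\infty}(\tria)$ symbol shifts the index set $\mathcal{I}$ into itself. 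It is worth flagging that \emph{analytic} here must mean boundary values of $H^{\infty}(\tria)$ and not merely membership in $H^2(\partial_{d}(\tria))\cap L^{\infty}(\mathbb T^n)$: the function $\zeta^{-\varepsilon_2}$ satisfies $-\varepsilon_2\in\mathcal{I}$ yet $-2\varepsilon_2\notin\mathcal{I}$, so multiplication by it leaves $H^2(\partial_{d}(\tria))$, and (ii) would fail under the looser reading. The case where $\overline\phi$ is analytic I would reduce to the one just settled by taking adjoints via (i): $(T_{\phi,\tria}T_{\psi,\tria})^*=T_{\overline\psi,\tria}T_{\overline\phi,\tria}=T_{\overline{\psi}\,\overline{\phi},\tria}=(T_{\phi\psi,\tria})^*$, applying the analytic-right-factor case to the analytic symbol $\overline\phi$.

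Finally, for (iv) I would test $T_{\phi,\tria}$ against the orthonormal basis. For $\alpha,\beta\in\mathcal{I}$,
\[
\langle T_{\phi,\tria}\zeta^\alpha,\zeta^\beta\rangle=\langle \phi\zeta^\alpha,\zeta^\beta\rangle=\int_{\mathbb T^n}\phi\,\overline{\zeta^{\beta-\alpha}}\,dm=\langle \phi,\zeta^{\beta-\alpha}\rangle,
\]
so $T_{\phi,\tria}=0$ forces the $(\beta-\alpha)$-th Fourier coefficient of $\phi$ to vanish for all $\alpha,\beta\in\mathcal{I}$. The proof then reduces to the combinatorial fact that $\mathcal{I}-\mathcal{I}=\mathbb Z^n$: given $\gamma\in\mathbb Z^n$, taking $\alpha=(N,0,\ldots,0)$ makes every partial sum $\sum_{j=1}^k\alpha_j$ equal to $N$, so for $N$ large both $\alpha$ and $\alpha+\gamma$ satisfy the defining inequalities $\sum_{j=1}^k(\cdot)_j\Ge 1-k$ of $\mathcal{I}$, giving $\gamma=(\alpha+\gamma)-\alpha$. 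Hence every Fourier coefficient of $\phi$ vanishes and $\phi=0$ a.e. I expect this surjectivity of the difference set $\mathcal{I}-\mathcal{I}$ to be the only step requiring genuine thought; everything else is routine projection algebra, with the single real pitfall being the correct meaning of \emph{analytic} flagged in (ii).
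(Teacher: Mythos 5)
Your proposal is correct and follows essentially the same route as the paper's proof: the same inner-product manipulations for (i) and (iii), the same multiplier argument for (ii), and the same test of $T_{\phi,\tria}$ against the monomial basis together with the identity $\mathcal{I}-\mathcal{I}=\mathbb Z^n$ for (iv). You supply two details the paper leaves implicit --- the caveat on what \emph{analytic} must mean in (ii) and the explicit verification that $\mathcal{I}-\mathcal{I}=\mathbb Z^n$ via $\alpha=(N,0,\ldots,0)$ --- but these are elaborations of the same argument, not a different method.
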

\begin{proof}
    $\mathrm{(i)}$ For $f,g \in H^2(\tria),$ we have
    \allowdisplaybreaks
    \beqn
    \langle T_{\phi, \tria}^* f, g \rangle = \langle f, P(\phi g) \rangle = \langle f, \phi g \rangle  &=&\langle \overline{\phi}f, g \rangle = \langle P(\overline{\phi}f), g \rangle = \langle T_{\overline{\phi}, \tria} f, g \rangle.
    \eeqn
    Hence, $T_{\phi, \tria}^*$ = $T_{\overline{\phi}, \tria}.$
    
 $\mathrm{(ii)}$ Assume that $\psi$ is analytic. Then for $f \in H^2(\tria),$ $$T_{\phi, \tria} T_{\psi, \tria}(f)=T_{\phi, \tria}(\psi f)=P(\phi \psi f)= T_{\phi \psi, \tria}.$$
    Similarly, we have the conclusion when $\overline{\phi}$ is analytic.
    
$\mathrm{(iii)}$ For $f, g \in H^2(\tria),$
\allowdisplaybreaks
\beqn
\langle H_{\overline{\phi}, \tria}^* H_{\psi, \tria}f, g \rangle &=& \langle (I-P)(\psi f), (I-P)(\overline{\phi} g) \rangle \\ &=& \langle \psi f , \overline{\phi} g  \rangle -  \langle \psi f, P(\overline{\phi} g) \rangle \\ &=& \langle P (\phi \psi f) , g  \rangle -  \langle P(\psi f), P(\overline{\phi} g) \rangle
\\ &=& \langle (-T_{\phi, \tria} T_{\psi, \tria} + T_{\phi \psi, \tria})(f), g \rangle.
\eeqn
Hence, $T_{\phi, \tria} T_{\psi, \tria} -T_{\phi \psi, \tria}=-H_{\overline{\phi}, \tria}^* H_{\psi, \tria}.$

$\mathrm{(iv)}$ Let $\phi(z)= \displaystyle \sum_{\gamma \in \mathbb Z^n}a_{\gamma}z^\gamma.$ For $\alpha, \beta \in \mathcal {I},$ we have 
    \beqn
      0 = \langle T_{\phi, \tria}z^{\alpha}, z^{\beta} \rangle  = \langle P(\phi z^{\alpha}), z^{\beta} \rangle 
      = \langle\phi z^{\alpha}, z^{\beta} \rangle 
      &=& \langle \sum_{\gamma \in \mathbb Z^n}a_{\gamma}z^{\gamma +\alpha}, z^{\beta} \rangle \\
      &=& a_{\beta - \alpha}.
    \eeqn
Note that $\{\beta -\alpha : \alpha, \beta \in \mathcal {I} \} =\mathbb Z^n.$ Hence, $a_{\gamma} =0$ for all $\gamma \in \mathbb Z^n.$ This completes the proof.
\end{proof}
\begin{remark} In view of Proposition \ref{Injective}$\mathrm{(iv)},$ the map $\phi \rightarrow T_{\phi, \tria}$ from $L^{\infty}(\mathbb T^n)$ into the set of all Toeplitz operators on $\tria$ is injective. \hfill $\diamondsuit$
\end{remark}
\section{The main theorem}\label{4}
The following result illustrates the relationship between Toeplitz operators on the n-dimensional Hartogs triangle and the unit polydisc. It shows that every Toeplitz operator on $H^2(\tria)$ is unitarily equivalent to some Toeplitz operator on $ H^2(\mathbb D^n)$ and vice versa. 
Note that $\phi \in L^{\infty}(\mathbb T^n)$ if and only if $\phi \circ \varphi^{-1} \in L^{\infty}(\mathbb T^n)$ if and only if $\phi \circ \varphi\in L^{\infty}(\mathbb T^n).$ 
\begin{theorem} \label{Toeplitz- relation}
For $\phi \in L^{\infty}(\mathbb T^n),$ $$ T_{\phi, \tria} = \Psi^{-1}T_{\phi \circ \varphi^{-1}, \mathbb D^n}\Psi \mbox{~and~} T_{\phi, \mathbb D^n} = \Psi T_{\phi \circ \varphi, \tria}\Psi^{-1} $$ where $\Psi$ is as given in \eqref{Psi map-new}.
\end{theorem}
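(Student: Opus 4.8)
The plan is to reduce the theorem to two intertwining relations for the unitary $\Psi$ of \eqref{Psi map-new}: one for the orthogonal projections that define the two families of Toeplitz operators, and one for the multiplication operators by the symbols. Write $P_{\tria}$ and $P_{\mathbb D^n}$ for the projections of $L^2(\mathbb T^n)$ onto $H^2(\partial_d(\tria))$ and $H^2(\partial_d(\mathbb D^n))$ respectively, and denote by $M_g$ the operator of multiplication by $g\in L^\infty(\mathbb T^n)$ on $L^2(\mathbb T^n)$. Then $T_{\phi,\tria}$ is just the restriction of $P_{\tria}M_\phi$ to $H^2(\partial_d(\tria))$, and likewise on the polydisc side. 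Thus it suffices to prove
\[
\Psi P_{\tria}\Psi^{-1}=P_{\mathbb D^n}\qquad\text{and}\qquad \Psi M_\phi\Psi^{-1}=M_{\phi\circ\varphi^{-1}},
\]
after which a one-line composition yields the first identity, and replacing $\phi$ by $\phi\circ\varphi$ yields the second.

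First I would establish the projection identity. Since $\Psi$ is unitary on $L^2(\mathbb T^n)$ and, by the statement following \eqref{Psi map-new}, restricts to a unitary from $H^2(\partial_d(\tria))$ onto $H^2(\partial_d(\mathbb D^n))$, it carries the closed subspace $H^2(\partial_d(\tria))$ onto $H^2(\partial_d(\mathbb D^n))$. A unitary mapping one closed subspace onto another conjugates the corresponding orthogonal projections, so $\Psi P_{\tria}\Psi^{-1}=P_{\mathbb D^n}$ is immediate.

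Next I would verify the multiplication identity, which is where the Jacobians enter. For $g\in L^2(\mathbb T^n)$, a direct substitution using \eqref{Psi map-new} and \eqref{Psi inverse} gives
\[
\Psi\big(M_\phi\Psi^{-1}g\big)=J_{\varphi^{-1}}\cdot\big(J_\varphi\circ\varphi^{-1}\big)\cdot(\phi\circ\varphi^{-1})\cdot g.
\]
The chain rule applied to $\varphi\circ\varphi^{-1}=\mathrm{id}$ yields $J_{\varphi^{-1}}(w)\,J_\varphi(\varphi^{-1}(w))=1$, so the two Jacobian factors cancel and the right-hand side equals $(\phi\circ\varphi^{-1})\,g=M_{\phi\circ\varphi^{-1}}g$; the same cancellation can be read off directly from the explicit formulas \eqref{Jacobian phi} and \eqref{phi inverse}. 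Combining the two identities,
\[
T_{\phi,\tria}=P_{\tria}M_\phi=\Psi^{-1}P_{\mathbb D^n}\Psi\,\Psi^{-1}M_{\phi\circ\varphi^{-1}}\Psi=\Psi^{-1}\big(P_{\mathbb D^n}M_{\phi\circ\varphi^{-1}}\big)\Psi=\Psi^{-1}T_{\phi\circ\varphi^{-1},\mathbb D^n}\Psi,
\]
the last equality holding because $\Psi$ sends $H^2(\partial_d(\tria))$ into $H^2(\partial_d(\mathbb D^n))$, on which $P_{\mathbb D^n}M_{\phi\circ\varphi^{-1}}$ restricts to $T_{\phi\circ\varphi^{-1},\mathbb D^n}$. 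Substituting $\phi\circ\varphi$ for $\phi$ and using $(\phi\circ\varphi)\circ\varphi^{-1}=\phi$ converts this into $T_{\phi\circ\varphi,\tria}=\Psi^{-1}T_{\phi,\mathbb D^n}\Psi$, that is, $T_{\phi,\mathbb D^n}=\Psi T_{\phi\circ\varphi,\tria}\Psi^{-1}$.

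I expect the only genuine care to lie in the multiplication identity: one must track that the composition $[\,\cdots]\circ\varphi^{-1}$ distributes correctly over the product $\phi\cdot J_\varphi\cdot(g\circ\varphi)$, and that the two Jacobian terms are reciprocals \emph{on} $\mathbb T^n$ (not merely on the interior), which is precisely why the cancellation is cleanest when justified through the chain-rule identity $J_{\varphi^{-1}}\cdot(J_\varphi\circ\varphi^{-1})\equiv 1$, valid wherever $\varphi^{-1}$ is defined. Everything else — the unitarity of $\Psi$, the conjugation of projections, and the bookkeeping of restrictions to the Hardy subspaces — is formal.
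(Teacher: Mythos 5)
Your proposal is correct and follows essentially the same route as the paper: both reduce the theorem to the conjugation identity $\Psi P_{H^{2}(\partial_{d}(\tria))}\Psi^{-1}=P_{H^{2}(\partial_{d}(\mathbb D^n))}$ together with the Jacobian cancellation $J_{\varphi^{-1}}\cdot\big(J_{\varphi}\circ\varphi^{-1}\big)=1$, and then obtain the second identity by substituting $\phi\circ\varphi$ for $\phi$. The only difference is organizational: you deduce the projection identity abstractly from the fact (noted after \eqref{Psi map-new}) that $\Psi$ restricts to a unitary between the two Hardy subspaces and isolate the multiplication intertwining $\Psi M_{\phi}\Psi^{-1}=M_{\phi\circ\varphi^{-1}}$ as a separate step, whereas the paper verifies the projection identity by a direct Fourier-series computation and absorbs the multiplication intertwining into a single chain of equalities.
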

\begin{proof}
Let $f\in L^2(\mathbb T^n).$ Then $ \displaystyle f(z)=\sum_{\alpha \in \mathbb Z^n}a_{\alpha}z^{\alpha}.$ Now,
\allowdisplaybreaks
\begin{align*}
    \Psi P_{ H^{2}(\partial_{d}(\tria))}\Psi^{-1}(f)(z)& \overset{\eqref{Psi inverse}}=\Psi P_{H^{2}(\partial_{d}(\tria))}(J_{\varphi} \cdot f \circ \varphi)(z) \\ &= \Psi P_{H^{2}(\partial_{d}(\tria))}\left(\sum_{\alpha \in \mathbb Z^n}a_{\alpha }J_{\varphi}(z)\varphi(z)^{\alpha} \right)\\ &=\Psi\left(\sum_{\alpha \in \mathbb Z^n_+}a_{\alpha}J_{\varphi}(z)\varphi(z)^{\alpha}\right) \\ & \overset{\eqref{Psi map}}=J_{\varphi^{-1}}(z)\left(\sum_{\alpha \in \mathbb Z^n_+}a_{\alpha}J_{\varphi}(\varphi^{-1}(z))(\varphi(\varphi^{-1}(z))^{\alpha}\right) \\&= \sum_{\alpha \in \mathbb Z^n_+}a_{\alpha}z^{\alpha} \\ &= P_{ H^{2}(\partial_{d}(\mathbb D^n))}(f)(z).
\end{align*}
Hence, \beq \label{Projections relation} \Psi P_{H^{2}(\partial_{d}(\tria))}\Psi^{-1}=P_{H^{2}(\partial_{d}(\mathbb D^n))}. \eeq
The following diagram summarizes the observation.
\begin{center}
\begin{tikzcd}
 L^2(\mathbb T^n)\arrow[r, "\Psi"] \arrow[d, , "P_{H^{2}(\partial_{d}(\tria))}" ]
& L^2(\mathbb T^n) \arrow[d, "P_{ H^{2}(\partial_{d}(\mathbb D^n))}" ] \\
H^{2}(\partial_{d}(\tria)) \arrow[r, "\Psi" ]
&  H^{2}(\partial_{d}(\mathbb D^n)).
\end{tikzcd}
\end{center}
Let $\phi \in L^{\infty}(\mathbb T^n).$ For $h \in H^2(\partial_{d}(\mathbb D^n)),$
\begin{align*}
  \Psi T_{\phi, \tria} \Psi^{-1}(h)(z) &= \Psi P_{H^{2}(\partial_{d}(\tria))}(\phi  \cdot \Psi^{-1}(h))(z) \\ &\overset{\eqref{Psi map-new}}= \Psi P_{H^{2}(\partial_{d}(\tria))}\Big( J_{\varphi}(z) \cdot \phi(z) \cdot h \circ \varphi(z)\Big) \\ & \overset{\eqref{Psi map-new}}= \Psi P_{H^{2}(\partial_{d}(\tria))}\Psi^{-1}(\phi\circ \varphi^{-1} \cdot h)(z) \\ &\overset{\eqref{Projections relation}}= P_{H^{2}(\partial_{d}(\mathbb D^n))}(\phi\circ \varphi^{-1} \cdot h)(z) \\ &= T_{\phi \circ \varphi^{-1}, \mathbb D^n} (h)(z)
\end{align*}
Hence, \beq \label{10} \Psi T_{\phi, \tria} \Psi^{-1}=T_{\phi \circ \varphi^{-1}, \mathbb D^n}. \eeq
By replacing $\phi$ by $\phi \circ \varphi$ in \eqref{10}, we get the remaining part. 
\end{proof}
Let $j=1, \ldots, n$ and let $\mathscr M_{z_j}$ denote the linear operator of multiplication by the coordinate function $z_j$ in $H^2(\tria)$:
\beqn
\mathscr M_{z_j}f=z_j f~\mbox{whenever}~f \in H^2(\tria)~\mbox{and}~{z_j} f \in H^2(\tria).
\eeqn 
By \cite[Proposition 4.2]{CJP2023}, $\mathscr M_{z_j}, j=1, \ldots, n,$ is a bounded linear operator on $H^2(\tria).$ Also, let $M_j, j=1, \ldots,n,$ denote the bounded linear operator of multiplication by the coordinate function $z_j$ in $H^2(\mathbb D^n)$. 

Here are some immediate consequences of the preceding theorem. 
Compact operators on $H^2(\mathbb D^n)$ are characterized in terms of multiplication operators $M_1, \ldots M_n$ on $H^2(\mathbb D^n).$ In fact, a bounded linear map $T$ on $H^2(\mathbb D^n)$ is compact if and only if $M_j^{*k} T M_{i}^{k} \rightarrow 0 $ in norm for all $1 \Le i,j \Le n$ (see \cite[Theorem 3.2]{MSS2018}).  However, for $\tria,$ we obtain only the necessary conditions for the compactness of bounded linear operators on $H^2(\tria).$
\begin{corollary}\label{compact}
   If a bounded linear map $T$ on $H^2(\tria)$ is compact then $\mathscr M_{z_j}^{*k}T\mathscr M_{z_i}^{k} \rightarrow 0 $ in norm as $k \rightarrow \infty$ for all $1 \Le i,j \Le n.$ 
\end{corollary}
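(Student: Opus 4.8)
The plan is to transport the whole problem to the polydisc via the unitary $\Psi$ of \eqref{Psi map-new} and then invoke the known compactness criterion \cite[Theorem~3.2]{MSS2018} on $H^2(\mathbb D^n)$. Since $\Psi$ is unitary and $T$ is compact, the operator $\widetilde T:=\Psi T\Psi^{-1}$ is a compact operator on $H^{2}(\partial_{d}(\mathbb D^n))\cong H^2(\mathbb D^n)$. The key preliminary task is therefore to identify the conjugated operator $\Psi\mathscr M_{z_j}\Psi^{-1}$ in terms of the coordinate multiplications $M_1,\ldots,M_n$; this is the one genuinely non-routine step, and it is also what forces the statement to give only a necessary condition.

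First I would observe that $\mathscr M_{z_j}$ is itself a Toeplitz operator. The symbol $z_j$ is holomorphic on $\tria$ and unimodular on $\mathbb T^n$, hence lies in $L^\infty(\mathbb T^n)$; moreover $z_j\cdot z^\alpha=z^{\alpha+\varepsilon_j}$ with $\alpha+\varepsilon_j\in\mathcal{I}$ whenever $\alpha\in\mathcal{I}$ (the partial sums defining $\mathcal{I}$ only increase), so $P$ fixes $z^{\alpha+\varepsilon_j}$ and thus $\mathscr M_{z_j}=T_{z_j,\tria}$. Applying Theorem~\ref{Toeplitz- relation} gives $\Psi\mathscr M_{z_j}\Psi^{-1}=T_{z_j\circ\varphi^{-1},\mathbb D^n}$. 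From the explicit form \eqref{phi inverse} of $\varphi^{-1}$, its $j$-th coordinate is $\prod_{l=j}^n z_l$, so $z_j\circ\varphi^{-1}=\prod_{l=j}^n z_l$ is an analytic symbol and the associated Toeplitz operator is a product of coordinate multiplications:
\[
\Psi\mathscr M_{z_j}\Psi^{-1}=\prod_{l=j}^n M_l=:N_j.
\]

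With this identification I would finish by a norm estimate. Since $\Psi$ is unitary, $\|\mathscr M_{z_j}^{*k}T\mathscr M_{z_i}^k\|=\|N_j^{*k}\widetilde T N_i^k\|$. Because the $M_l$ commute, I can write $N_j^{*k}=\big(\prod_{l=j+1}^n M_l^{*k}\big)M_j^{*k}$ and $N_i^k=M_i^k\big(\prod_{l=i+1}^n M_l^k\big)$, so that
\[
N_j^{*k}\widetilde T N_i^k=\Big(\prod_{l=j+1}^n M_l^{*k}\Big)\,M_j^{*k}\,\widetilde T\,M_i^k\,\Big(\prod_{l=i+1}^n M_l^k\Big).
\]
Each $M_l$ is a contraction (indeed an isometry, as $|z_l|=1$ on $\mathbb T^n$), so the flanking products have norm at most $1$, whence $\|N_j^{*k}\widetilde T N_i^k\|\le\|M_j^{*k}\widetilde T M_i^k\|$. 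As $\widetilde T$ is compact, the right-hand side tends to $0$ by the ``only if'' direction of \cite[Theorem~3.2]{MSS2018}, giving the claim.

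The reason only the necessary direction survives is precisely the asymmetry produced by the identification above: the Hartogs condition is governed by $N_j=\prod_{l\ge j}M_l$ rather than by $M_j$ alone, and the estimate runs in only one direction, so $\|N_j^{*k}\widetilde T N_i^k\|\to 0$ is strictly weaker than $\|M_j^{*k}\widetilde T M_i^k\|\to 0$ for $j<n$. Recovering the latter (hence compactness of $\widetilde T$, equivalently of $T$) from the former would be the main obstacle to a converse, which is exactly the point where the polydisc characterization fails to transfer.
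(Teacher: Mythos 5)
Your proof is correct and follows essentially the same route as the paper: conjugate by the unitary $\Psi$, identify $\Psi\mathscr M_{z_j}\Psi^{-1}$ with $\prod_{l=j}^n M_l$, strip off the surplus factors as contractions, and invoke \cite[Theorem~3.2]{MSS2018}. The only cosmetic differences are that you rederive the conjugation identity (which the paper simply cites from \cite[Proposition~8.3]{CJP2023}) and that you reduce to $\|M_j^{*k}\,\Psi T\Psi^{-1}\,M_i^{k}\|$ while the paper reduces to $\|M_n^{*k}\,\Psi T\Psi^{-1}\,M_n^{k}\|$; both bounds tend to zero by the same theorem.
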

\begin{proof}
    Let T be a compact operator on $H^2(\tria).$ Then $\Psi T \Psi^{-1}$ is compact on $H^2(\mathbb D^n).$ Now for $1 \Le i,j \Le n$ and $k \in \mathbb N,$ by \cite[Proposition 8.3]{CJP2023},
    \begin{equation*}
        \mathscr M_{z_j}^{*k}T\mathscr M_{z_i}^{k}= \left(\Psi^{-1} \prod_{l=j}^nM_{l} \Psi \right)^{*k}T \left(\Psi^{-1} \prod_{p=i}^nM_{p} \Psi \right)^k. 
    \end{equation*}
  which is equivalent to  
    \begin{equation*}
        \mathscr M_{z_j}^{*k}T\mathscr M_{z_i}^{k}= \Psi^{-1} \prod_{l=j}^nM^{*k}_{l} \Psi T \Psi^{-1} \prod_{p=i}^nM^k_{p} \Psi .
    \end{equation*}
    Hence,
    \begin{equation} \label{Relation M-M}
       \| \mathscr M_{z_j}^{*k}T\mathscr M_{z_i}^{k}\| \Le \| M^{*k}_{n} \Psi T \Psi^{-1} M^k_{n} \|,
    \end{equation}
    which goes to $0$ as $k \rightarrow \infty$ (see \cite[Theorem 3.2]{MSS2018}).
 \end{proof}
 The converse of the above corollary may not be true. 
\begin{example}
    Consider the bounded linear operator $T:=I-\mathscr M_{z_n}\mathscr M_{z_n}^*$ on $H^2(\tria),$ where $I$ is the identity operator on $H^2(\tria).$ An application of \cite[Eqn~(4.11)]{CJP2023} yields that \beqn \mathscr M_{z_j}^{*k}T\mathscr M_{z_i}^{k}=0, \quad k \in \mathbb N, ~1 \Le i,j \Le n.\eeqn However, it is easy to see from \cite[Lemma 6.2]{CJP2023} that the operator $T$ is noncompact. \eof
\end{example} 
Recall that a bounded linear operator $T$ on a Hilbert space $H$ is said to be a {\it shift} if $T$ is an isometry and $T^{*k} \rightarrow 0$ as $k \rightarrow 0$ in the strong operator topology. 

\begin{corollary}
If $\phi$ is a nonconstant inner function on $\tria,$ then $T_{\phi, \tria}$ is a shift. 
\end{corollary}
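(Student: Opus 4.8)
The plan is to reduce the claim, via Theorem~\ref{Toeplitz- relation}, to the corresponding statement on the polydisc. Let $\theta$ be a nonconstant inner function on $\tria$. By Theorem~\ref{Toeplitz- relation} we have the unitary equivalence $\Psi T_{\theta, \tria} \Psi^{-1} = T_{\theta \circ \varphi^{-1}, \mathbb D^n}$, where $\Psi$ is the unitary map from \eqref{Psi map-new}. Since being a shift is preserved under unitary equivalence (an isometry stays an isometry, and $T^{*k} \to 0$ strongly is transported by the unitary), it suffices to show that $T_{\theta \circ \varphi^{-1}, \mathbb D^n}$ is a shift on $H^2(\mathbb D^n)$. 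The first step I would carry out is to verify that $\psi := \theta \circ \varphi^{-1}$ is a nonconstant inner function on $\mathbb D^n$ in the classical sense. By Definition~\ref{def-inner}, $\theta \in H^\infty(\tria)$ is unimodular on $\mathbb T^n$; by Remark~\ref{IFR}, $\theta \circ \varphi^{-1} \in H^\infty(\mathbb D^n)$, and since $\varphi$ maps $\mathbb T^n$ to $\mathbb T^n$ (see Remark~\ref{Distinguishd boundary same}), $\psi$ remains unimodular a.e.\ on $\mathbb T^n$. Nonconstancy of $\theta$ transfers to $\psi$ because $\varphi^{-1}$ is a biholomorphism.

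Next I would establish the two defining properties of a shift for the analytic Toeplitz operator $T_{\psi, \mathbb D^n}$ with $\psi$ inner. For the isometry property, when $\psi$ is analytic (inner functions lie in $H^\infty(\mathbb D^n)$), part~(ii) of Proposition~\ref{Injective} (stated there for $\tria$, but the same computation holds on $\mathbb D^n$) gives $T_{\psi,\mathbb D^n}^* T_{\psi,\mathbb D^n} = T_{\bar\psi,\mathbb D^n} T_{\psi,\mathbb D^n} = T_{\bar\psi\psi,\mathbb D^n} = T_{|\psi|^2,\mathbb D^n} = T_{1,\mathbb D^n} = I$, using $|\psi| = 1$ a.e.\ on $\mathbb T^n$. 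Here I use part~(i) to write $T_{\psi,\mathbb D^n}^* = T_{\bar\psi,\mathbb D^n}$, and part~(ii) applies since $\bar\psi$ is anti-analytic (so $\overline{\bar\psi} = \psi$ is analytic). Thus $T_{\psi,\mathbb D^n}$ is an isometry.

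For the strong convergence $T_{\psi,\mathbb D^n}^{*k} \to 0$, I would observe that for analytic $\psi$ the operator $T_{\psi,\mathbb D^n}$ is just multiplication by $\psi$ on $H^2(\mathbb D^n)$, so $T_{\psi,\mathbb D^n}^k = T_{\psi^k,\mathbb D^n}$ is multiplication by $\psi^k$. It then suffices to show $T_{\psi^k,\mathbb D^n}^* f \to 0$ for $f$ ranging over the dense set of polynomials (or monomials $z^\alpha$), and extend by uniform boundedness since $\|T_{\psi,\mathbb D^n}^{*k}\| = 1$. For a monomial, $\langle T_{\psi^k,\mathbb D^n}^* z^\alpha, z^\beta\rangle = \langle z^\alpha, \psi^k z^\beta\rangle$, and since $\psi$ is nonconstant inner, the Fourier coefficients of $\psi^k z^\beta$ spread to higher and higher degrees as $k$ grows, forcing each fixed inner product to vanish for large $k$; a clean way to package this is that $\psi^k \to 0$ weakly in $L^2(\mathbb T^n)$ when $\psi$ is nonconstant inner, which yields $\langle \psi^k z^\beta, z^\alpha\rangle \to 0$.

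The main obstacle I anticipate is this last point: proving $T_{\psi,\mathbb D^n}^{*k}\to 0$ strongly rigorously. The isometry computation is routine once the Brown--Halmos-type product rule is in hand, but the decay of $T_{\psi,\mathbb D^n}^{*k}$ requires genuinely using that $\psi$ is \emph{nonconstant} inner, not merely inner (a unimodular constant would give a unitary, not a shift). I would handle it by proving the weak convergence $\psi^k \rightharpoonup 0$ in $L^2(\mathbb T^n)$: since $\|\psi^k\|_{L^2} = 1$ for all $k$, it is enough to check $\int_{\mathbb T^n} \psi^k \overline{z^\alpha}\, dm \to 0$ for each multi-index $\alpha$, which follows because $\psi$ nonconstant inner implies no nonzero $L^2$ function is fixed under multiplication by $\psi$ in the relevant sense; equivalently, $\bigcap_{k} \psi^k H^2(\mathbb D^n) = \{0\}$ by the multivariable analogue of the Wold/Beurling-type decomposition for the isometry $T_{\psi,\mathbb D^n}$. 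Invoking that the multiplication operator by a nonconstant inner function is a shift on $H^2(\mathbb D^n)$ (a known fact for the polydisc) would let me conclude directly, and then transport back through $\Psi$.
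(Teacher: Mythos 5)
Your proposal is correct and follows essentially the same route as the paper: reduce to the polydisc via the unitary $\Psi$, check that $\phi\circ\varphi^{-1}$ is a nonconstant inner function on $\mathbb D^n$ (Remark~\ref{IFR}), verify the isometry property, and settle pureness via the Wold decomposition by invoking the known polydisc fact that $\bigcap_{k}(\phi\circ\varphi^{-1})^{k}H^2(\mathbb D^n)=\{0\}$ for nonconstant inner symbols (the paper cites \cite[Theorem 4.1]{KDPS2022} for exactly this). The only cosmetic difference is that the paper applies Wold on $H^2(\tria)$ and transports the intersection $\bigcap_k\phi^k H^2(\tria)$ through $\Psi^{-1}$, whereas you transport the entire shift property; these are equivalent.
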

\begin{proof}
Let $\phi \in H^{\infty}(\tria)$ be a nonconstant inner function. By Remark \ref{IFR}, $\phi \circ \varphi^{-1}$ is inner function on $\mathbb D^n.$ It is easy to see that $T_{\phi \circ
\varphi^{-1}, \mathbb D^n}$ is an isometry. This combined with Theorem~\ref{Toeplitz- relation} yields that $T_{\phi, \tria}$ is an isometry. Following the von Neumann-Wold decomposition for isometries, it suffices to show that 
$$
\displaystyle \bigcap_{k=0}^{\infty} \phi^{k} H^2(\tria)=\{0\}.
$$
Now,
\beqn
\displaystyle \bigcap_{k=0}^{\infty} \phi^{k} H^2(\tria)& \overset{\eqref{Psi inverse}}=& \displaystyle \bigcap_{k=0}^{\infty} \phi^{k} \Psi^{-1} H^2(\mathbb D^n) \\ &=& \displaystyle \bigcap_{k=0}^{\infty} \Psi^{-1}\left ((\phi \circ \varphi^{-1})^{k}  H^2(\mathbb D^n)\right) \\ &=& \displaystyle \Psi^{-1}\left (\bigcap_{k=0}^{\infty} (\phi \circ \varphi^{-1})^{k}  H^2(\mathbb D^n)\right) = \{0\},
\eeqn
where the last equality follows from \cite[Theorem 4.1]{KDPS2022}. \end{proof}

\section{Applications of the main theorem} \label{5}
In this section, we present several applications of Theorem~\ref{Toeplitz- relation}. We begin with the following theorem, an analogue of the Brown-Halmos theorem for $n$-dimensional Hartogs triangle. Recall that the Brown-Halmos theorem states that a bounded linear operator $T$ on $H^2(\mathbb D)$ is Toeplitz if and only if $M^*_zTM_z=T,$ where $M_z$ is the multiplication operator by the coordinate function $z$ in $H^2(\mathbb D)$ (see \cite[Theorem 6]{BH1964}). 

\begin{theorem}\label{Brown-Halmos}
    A bounded linear operator $T$ on $H^2(\tria)$ is a Toeplitz operator if and only if $\mathscr M^*_{z_j}T \mathscr M_{z_j}=T$ for all $j = 1, \ldots, n.$
\end{theorem}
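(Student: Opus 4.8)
The plan is to transport the Brown--Halmos characterization from the polydisc to the Hartogs triangle via the unitary $\Psi$ of \eqref{Psi map-new} and the main theorem, Theorem~\ref{Toeplitz- relation}. The forward direction is immediate from the algebra of Toeplitz operators: by Proposition~\ref{Injective}$\mathrm{(ii)}$, since $z_j$ is analytic and $\overline{z_j}$ makes $\overline{z_j}$ analytic, we have $\mathscr M^*_{z_j} T_{\phi,\tria} \mathscr M_{z_j} = T_{\overline{z_j},\tria} T_{\phi,\tria} T_{z_j,\tria} = T_{\overline{z_j}\,\phi\, z_j,\tria} = T_{\phi,\tria}$, using that $\overline{z_j} z_j = |z_j|^2 = 1$ on $\mathbb T^n$. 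Here I am implicitly using that $\mathscr M_{z_j}$ acts as the Toeplitz operator $T_{z_j,\tria}$; strictly I should confirm that $\mathscr M_{z_j} = T_{z_j,\tria}$ on $H^2(\tria)$, which follows because multiplication by $z_j$ of an $H^2(\tria)$ function, when it lands back in $H^2(\tria)$, needs no projection, and the Toeplitz operator with analytic symbol $z_j$ is exactly multiplication followed by projection.

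For the converse, the strategy is to conjugate the hypothesis through $\Psi$ and reduce to the polydisc Brown--Halmos theorem stated in the excerpt (the cited \cite[Theorem 6]{BH1964} and its polydisc analogue). The first key step is to translate the coordinate multiplication operators: by \cite[Proposition 8.3]{CJP2023}, as already used in the proof of Corollary~\ref{compact}, one has $\mathscr M_{z_j} = \Psi^{-1}\big(\prod_{l=j}^n M_l\big)\Psi$. Given a bounded operator $T$ on $H^2(\tria)$ satisfying $\mathscr M^*_{z_j} T \mathscr M_{z_j} = T$ for all $j$, set $S := \Psi T \Psi^{-1}$, a bounded operator on $H^2(\mathbb D^n)$. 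Conjugating the hypothesis gives $\big(\prod_{l=j}^n M_l\big)^* S \big(\prod_{l=j}^n M_l\big) = S$ for each $j = 1, \ldots, n$.

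The next step is to deduce from these $n$ relations the genuinely coordinatewise relations $M_i^* S M_i = S$ for each individual $i$, which is the hypothesis of the polydisc Brown--Halmos theorem characterizing Toeplitz operators on $H^2(\mathbb D^n)$. The relation for $j=n$ gives $M_n^* S M_n = S$ directly. For $j = n-1$ we get $M_{n-1}^* M_n^* S M_n M_{n-1} = S$; substituting $M_n^* S M_n = S$ and using that the $M_i$ commute and each $M_i$ is an isometry, I would peel off the outer factors one at a time. The cleanest route is an inductive argument downward from $i=n$: assuming $M_{i+1}^* S M_{i+1} = \cdots = M_n^* S M_n = S$, combine with the relation for $j=i$ to isolate $M_i^* S M_i = S$, using commutativity of the $\{M_l\}$ and the isometry property $M_l^* M_l = I$ to cancel the already-established inner conjugations. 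Once all $M_i^* S M_i = S$ hold, the polydisc Brown--Halmos theorem yields a symbol $\psi \in L^\infty(\mathbb T^n)$ with $S = T_{\psi, \mathbb D^n}$. Then Theorem~\ref{Toeplitz- relation} (in the form $\Psi^{-1} T_{\psi,\mathbb D^n}\Psi = T_{\psi\circ\varphi, \tria}$) gives $T = \Psi^{-1} S \Psi = T_{\psi\circ\varphi,\tria}$, and since $\psi\circ\varphi \in L^\infty(\mathbb T^n)$, this exhibits $T$ as a Toeplitz operator on $H^2(\tria)$.

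The main obstacle I anticipate is the algebraic extraction step: from the $n$ nested relations involving products $\prod_{l=j}^n M_l$ one must rigorously recover the $n$ single-coordinate relations $M_i^* S M_i = S$. This requires care because the $M_i$ are isometries but not unitaries (no $M_i M_i^* = I$), so cancellation must proceed in the correct order and exploit the telescoping structure $\prod_{l=j}^n M_l = M_j \prod_{l=j+1}^n M_l$ together with commutativity; I would organize this as a clean downward induction on $i$ and verify that the isometry identity $M_i^* M_i = I$ suffices at each cancellation, avoiding any use of $M_i M_i^* = I$.
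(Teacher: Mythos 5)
Your proof is correct, and the converse (the substantive direction) follows the paper's argument exactly: conjugate by $\Psi$, use \cite[Proposition 8.3]{CJP2023} to turn the hypothesis into the nested relations $\bigl(\prod_{l=j}^n M_l\bigr)^* S \bigl(\prod_{l=j}^n M_l\bigr) = S$ for $S = \Psi T \Psi^{-1}$, peel these off by downward induction using only $M_l^* M_l = I$ and commutativity to obtain $M_i^* S M_i = S$ for each $i$, invoke the polydisc Brown--Halmos theorem of \cite[Theorem 3.1]{MSS2018}, and transport back via Theorem~\ref{Toeplitz- relation}; the cancellation you flag as the ``main obstacle'' is exactly the finite induction the paper performs, and it indeed never requires $M_iM_i^*=I$. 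Where you genuinely differ is the forward direction: the paper again routes through the polydisc (applying \cite[Theorem 3.1]{MSS2018} to $T_{\phi\circ\varphi^{-1},\mathbb D^n}$ and pulling the resulting identity back with \cite[Proposition 8.3]{CJP2023}), whereas you argue intrinsically on $\tria$ via Proposition~\ref{Injective}, writing $\mathscr M_{z_j}^* T_{\phi,\tria}\mathscr M_{z_j} = T_{\overline{z_j},\tria}T_{\phi,\tria}T_{z_j,\tria} = T_{\overline{z_j}\phi z_j,\tria} = T_{\phi,\tria}$. Your version is shorter and self-contained, but it leans on the identification $\mathscr M_{z_j} = T_{z_j,\tria}$ and on the fact that multiplication by $z_j$ maps $H^2(\partial_{d}(\tria))$ into itself (equivalently, $\mathcal I + \varepsilon_j \subseteq \mathcal I$), both of which you correctly flag and which are easy to verify from the description of the orthonormal basis; the paper's version avoids these checks by reusing the already-established unitary equivalence, at the cost of quoting the polydisc result in both directions.
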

\begin{proof}
Consider a Toeplitz operator $T_{\phi, \tria}$ on $H^2(\tria)$ with the symbol $\phi \in L^{\infty}(\mathbb T^n).$ By Theorem~\ref{Toeplitz- relation}, we have $\Psi T_{\phi, \tria} \Psi^{-1}=T_{\phi \circ \varphi^{-1}, \mathbb D^n}.$
From \cite[Theorem 3.1]{MSS2018}, we obtain $M_j^*\Psi T_{\phi, \tria} \Psi^{-1}M_j=\Psi T_{\phi, \tria} \Psi^{-1}$ for all $j=1, \ldots, n.$ Consequently,
\beq \label{relation-1}
\prod_{k=j}^nM_k^*\Psi T_{\phi, \tria} \Psi^{-1}\prod_{k=j}^nM_k= \Psi T_{\phi, \tria}\Psi^{-1}, \quad j=1, \ldots, n.
\eeq

Furthermore, as per \cite[Proposition 8.3]{CJP2023}, we have \beqn \mathscr M^*_{z_j}T_{\phi, \tria} \mathscr M_{z_j}=T_{\phi, \tria}, \quad j = 1, \ldots, n.\eeqn

Conversely, let $T$ be a bounded linear operator on $H^2(\tria)$ such that $\mathscr M^*_{z_j}T \mathscr M_{z_j}=T$ for all $j = 1, \ldots, n.$ By \cite[Proposition 8.3]{CJP2023}, we have 
\beq
\label{relation-2} 
\prod_{k=j}^nM_j^*\Psi T \Psi^{-1}\prod_{k=j}^nM_j= \Psi T \Psi^{-1}, \quad j=1, \ldots, n.
\eeq
Thus, for $j=n,$ $M_n^*\Psi T \Psi^{-1}M_n= \Psi T \Psi^{-1}.$
This combined with \eqref{relation-2} (with $j=n-1$) yields  
\beqn
\Psi T \Psi^{-1}=M_{n-1}^*M_n^*\Psi T \Psi^{-1}M_nM_{n-1}=M_{n-1}^*\Psi T \Psi^{-1}M_{n-1}.  
\eeqn
One may now proceed by a finite induction to conclude that $M_j^*\Psi T \Psi^{-1}M_j= \Psi T \Psi^{-1}$ for all $j=1, \ldots, n.$
Thus, by \cite[Theorem 3.1]{MSS2018}, $\Psi T \Psi^{-1}$ is a Toeplitz operator on $H^2(\mathbb D^n).$ Hence, by Theorem~\ref{Toeplitz- relation}, $T$ is a Toeplitz operator on $H^2(\tria).$
\end{proof}
Similar to the unit disc case, we get an analogous result for $\tria.$ 
\begin{corollary}
    The only compact Toeplitz operator on $H^2(\tria)$ is the zero operator.
\end{corollary}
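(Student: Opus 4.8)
The final statement asserts that the only compact Toeplitz operator on $H^2(\tria)$ is the zero operator.

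The plan is to transport the problem to the polydisc via the unitary $\Psi$ from \eqref{Psi map-new} and invoke the corresponding fact for $H^2(\mathbb D^n)$. By Theorem~\ref{Toeplitz- relation}, for $\phi \in L^{\infty}(\mathbb T^n)$ we have the unitary equivalence $\Psi T_{\phi, \tria}\Psi^{-1}=T_{\phi \circ \varphi^{-1}, \mathbb D^n}$. Since $\Psi$ is unitary, $T_{\phi, \tria}$ is compact if and only if $T_{\phi \circ \varphi^{-1}, \mathbb D^n}$ is compact on $H^2(\mathbb D^n)$. Therefore the statement reduces at once to the assertion that the only compact Toeplitz operator on the polydisc Hardy space is the zero operator.

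The key steps, in order, are as follows. First I would suppose $T_{\phi, \tria}$ is compact and pass to $T_{\phi \circ \varphi^{-1}, \mathbb D^n}$ via the displayed unitary equivalence. Second, I would apply Corollary~\ref{compact} (the necessary compactness condition) together with the Brown--Halmos relation $\mathscr M^*_{z_j}T_{\phi, \tria}\mathscr M_{z_j}=T_{\phi,\tria}$ established in Theorem~\ref{Brown-Halmos}: iterating this relation gives $\mathscr M_{z_j}^{*k}T_{\phi,\tria}\mathscr M_{z_j}^{k}=T_{\phi,\tria}$ for all $k$. Third, I would combine these: the Brown--Halmos identity forces $T_{\phi,\tria}=\mathscr M_{z_j}^{*k}T_{\phi,\tria}\mathscr M_{z_j}^{k}$ to be independent of $k$, while Corollary~\ref{compact} forces $\mathscr M_{z_j}^{*k}T_{\phi,\tria}\mathscr M_{z_j}^{k}\to 0$ in norm as $k\to\infty$ whenever $T_{\phi,\tria}$ is compact. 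Hence $T_{\phi,\tria}=0$ in operator norm, so $T_{\phi,\tria}$ is the zero operator.

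I expect the argument to be short and essentially obstruction-free, since both ingredients have been prepared in the excerpt. The only point requiring a word of care is that Corollary~\ref{compact} is stated with the mixed-index operators $\mathscr M_{z_j}^{*k}T\mathscr M_{z_i}^{k}$, so I would apply it with $i=j$ to obtain $\mathscr M_{z_j}^{*k}T\mathscr M_{z_j}^{k}\to 0$; meanwhile the Brown--Halmos relation with equal indices gives the constancy in $k$. Matching these two facts is immediate and yields $T_{\phi,\tria}=0$. Alternatively, one could bypass the Hartogs-side lemmas entirely and simply cite the polydisc result that the only compact Toeplitz operator on $H^2(\mathbb D^n)$ is zero, then transport back through $\Psi$; I would mention this as the conceptually cleanest route, since it makes transparent that the corollary is a direct consequence of Theorem~\ref{Toeplitz- relation}.
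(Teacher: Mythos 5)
Your argument is correct, and it is a genuinely shorter route than the one the paper takes. You combine two facts already established on the Hartogs side: Theorem~\ref{Brown-Halmos} gives $\mathscr M^*_{z_j}T_{\phi,\triangle_n}\mathscr M_{z_j}=T_{\phi,\triangle_n}$, which iterates to $\mathscr M^{*k}_{z_j}T_{\phi,\triangle_n}\mathscr M^{k}_{z_j}=T_{\phi,\triangle_n}$ for every $k$, while Corollary~\ref{compact} (with $i=j$) forces $\mathscr M^{*k}_{z_j}T_{\phi,\triangle_n}\mathscr M^{k}_{z_j}\to 0$ in norm when $T_{\phi,\triangle_n}$ is compact; the two together give $T_{\phi,\triangle_n}=0$ immediately. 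The paper instead transports everything to the polydisc via $\Psi$, derives $\prod_{j=1}^n M_j^{*}\,T_{\phi\circ\varphi^{-1},\mathbb D^n}\prod_{j=1}^n M_j=T_{\phi\circ\varphi^{-1},\mathbb D^n}$, and then runs a hands-on matrix-coefficient argument: the identity shows $\langle Tz^{\alpha},z^{\beta}\rangle$ is constant along the diagonal shifts $\alpha+r\sum\varepsilon_j$, $\beta+r\sum\varepsilon_j$, while compactness plus the weak convergence $z^{\alpha+r\sum\varepsilon_j}\rightharpoonup 0$ kills these entries in the limit. Both proofs rest on the same underlying tension (shift-conjugation invariance versus asymptotic decay under that conjugation), but yours reuses the machinery the paper has already set up and avoids redoing the weak-convergence computation; the paper's version has the mild advantage of not depending on Corollary~\ref{compact} and of exhibiting explicitly why the symbol's Fourier coefficients vanish. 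One cosmetic remark: the statement asserts the operator is zero, which you prove; the paper additionally invokes Proposition~\ref{Injective}(iv) to conclude $\phi=0$, which you could append in one line if desired.
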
  
\begin{proof}
Let $T_{\phi, \tria}$ be a compact Toeplitz operator on $H^2(\tria)$ with the symbol $\phi \in L^{\infty}(\mathbb T^n).$ By Theorem~\ref{Toeplitz- relation}, $T_{\phi \circ \varphi^{-1}, \mathbb D^n}$ is compact on $H^2(\mathbb D^n).$
    Since $\mathscr M^*_{z_1}T_{\phi, \tria} \mathscr M_{z_1}=T_{\phi, \tria}$ (see Theorem \ref{Brown-Halmos}), by Theorem~\ref{Toeplitz- relation} and \cite[Proposition 8.3]{CJP2023}, \beqn \prod_{j=1}^n M_{j}^*T_{\phi \circ \varphi^{-1}, \mathbb D^n}\prod_{j=1}^n M_j = T_{\phi \circ \varphi^{-1}, \mathbb D^n}.\eeqn As a consequence, for any $\alpha, \beta \in \mathbb Z^n_+,$ and $r \in \mathbb Z_+,$
    \beq \label{Compact zero}
    |\langle T_{\phi \circ \varphi^{-1}, \mathbb D^n}z^{\alpha}, z^{\beta} \rangle|=|\langle T_{\phi \circ \varphi^{-1}, \mathbb D^n}z^{\alpha+r\sum_{j=1}^n\varepsilon_j}, z^{\beta+r\sum_{j=1}^n\varepsilon_j} \rangle|.\eeq
Compact operators map weakly convergent sequences to norm convergent sequences (see \cite[Proposition~VI.3.3]{C1990}) and $\{z^{\alpha+r\sum_{j=1}^n\varepsilon_j}\}$ converges to $0$ weakly as $r \rightarrow \infty.$ Therefore, for any $\alpha \in \mathbb Z^n_+,$ $\|z^{\alpha+r\sum_{j=1}^n\varepsilon_j}\| \rightarrow 0$ as $r \rightarrow \infty.$  This combined with an application of Cauchy-Schwarz inequality in \eqref{Compact zero} yields that for all $\alpha, \beta \in \mathbb Z^n_+,$ $\langle T_{\phi \circ \varphi^{-1}, \mathbb D^n}z^{\alpha}, z^{\beta} \rangle=0.$ Hence, $T_{\phi, 
\tria}=0.$ This combined with Proposition \ref{Injective}$\mathrm{(iv)}$ yields that $\phi=0.$
\end{proof}
\begin{remark}
    For a function $\phi \in L^{\infty}(\mathbb T^n),$ it is easy to see from Theorem~\ref{Toeplitz- relation} that  $H_{\phi, \tria} = \Psi^{-1}H_{\phi \circ \varphi^{-1}, \mathbb D^n}\Psi,$ where $\Psi$ is as given in \eqref{Psi map-new}. Combining this with \cite[Theorem~5]{AYZ2009} yields that the zero operator is the only compact Hankel operator on $H^2(\tria).$ \hfill $\diamondsuit$
\end{remark}

Recall that a bounded linear operator T on a Hilbert space $H$ is called {\it partial isometry} if $T|_{ker(T)^{\perp}}$ is an isometry. The following theorem characterizes partial isometric Toeplitz operators on $H^2(\tria).$ Set $\Tilde{z}_j:=\frac{z_{j}}{z_{j+1}}$ for $j=1, \ldots, n-1,$ and $\Tilde{z}_n:=z_n.$
\begin{theorem} \label{Partial isometry result}
    For a nonzero function $\phi \in L^{\infty}(\mathbb T^n),$ the Toeplitz operator $T_{\phi, \tria}$ is a partial isometry if
and only if there exist inner functions $\theta_1, \theta_2 \in H^{\infty}(\tria)$ such that $\theta_1$ and $\theta_2$ depend on the different variables among $\Tilde{z_1}, \ldots, \Tilde{z}_n$ and $$T_{\phi, \tria}=T_{\theta_1, \tria}^*T_{\theta_2, \tria}.$$
\end{theorem}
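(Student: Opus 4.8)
The plan is to reduce the statement about $T_{\phi, \tria}$ to the corresponding characterization of partial isometric Toeplitz operators on $H^2(\mathbb D^n)$ via the unitary $\Psi$ of Theorem~\ref{Toeplitz- relation}, since $\Psi$ preserves the partial isometry property (being unitary, it carries $\ker$ and $\ker^{\perp}$ isometrically). By Theorem~\ref{Toeplitz- relation}, $T_{\phi, \tria}$ is a partial isometry if and only if $T_{\phi \circ \varphi^{-1}, \mathbb D^n} = \Psi T_{\phi, \tria} \Psi^{-1}$ is a partial isometry on $H^2(\mathbb D^n)$. Thus the first step is to invoke (or cite) the known polydisc characterization: a nonzero Toeplitz operator $T_{\psi, \mathbb D^n}$ on $H^2(\mathbb D^n)$ is a partial isometry if and only if $\psi = \overline{\eta_1}\,\eta_2$ for inner functions $\eta_1, \eta_2$ on $\mathbb D^n$ depending on disjoint sets of the variables $z_1, \ldots, z_n$, equivalently $T_{\psi, \mathbb D^n} = T_{\eta_1, \mathbb D^n}^* T_{\eta_2, \mathbb D^n}$. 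Applying this with $\psi = \phi \circ \varphi^{-1}$ produces inner functions $\eta_1, \eta_2$ on $\mathbb D^n$ with the required disjoint-variable dependence.

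The second step is to transport these polydisc inner functions back to $\tria$. I would set $\theta_i := \eta_i \circ \varphi$ for $i = 1, 2$; by Remark~\ref{IFR}, since $\eta_i \in H^{\infty}(\mathbb D^n)$ and $\eta_i = \theta_i \circ \varphi^{-1}$, we get $\theta_i \in H^{\infty}(\tria)$, and $\theta_i$ is inner on $\tria$ precisely because $\eta_i$ is unimodular on $\mathbb T^n$ and $\varphi$ maps $\mathbb T^n$ to $\mathbb T^n$ (see Remark~\ref{Distinguishd boundary same} and Definition~\ref{def-inner}). The crucial bookkeeping here is the translation of the disjoint-variable condition: the coordinates on the polydisc side are $z_1, \ldots, z_n$, and under $\varphi$ these pull back to $\Tilde{z}_1 = z_1/z_2, \ldots, \Tilde{z}_{n-1} = z_{n-1}/z_n, \Tilde{z}_n = z_n$ by the definition of $\varphi$ in \eqref{phi-map}. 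Hence $\eta_i$ depending on a subset $S_i$ of $\{z_1, \ldots, z_n\}$ translates exactly to $\theta_i$ depending on the corresponding subset of $\{\Tilde{z}_1, \ldots, \Tilde{z}_n\}$, and disjointness is preserved.

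The final step is to verify the operator identity. Using Theorem~\ref{Toeplitz- relation} together with Proposition~\ref{Injective}$\mathrm{(i)}$ (for the adjoint) and Proposition~\ref{Injective}$\mathrm{(ii)}$ (to combine the factors, noting that $\theta_2$ is analytic), I would compute
\begin{align*}
T_{\theta_1, \tria}^* T_{\theta_2, \tria} &= \Psi^{-1} T_{\eta_1, \mathbb D^n}^* \Psi \, \Psi^{-1} T_{\eta_2, \mathbb D^n} \Psi = \Psi^{-1} T_{\eta_1, \mathbb D^n}^* T_{\eta_2, \mathbb D^n} \Psi \\ &= \Psi^{-1} T_{\phi \circ \varphi^{-1}, \mathbb D^n} \Psi = T_{\phi, \tria},
\end{align*}
where the middle equality uses the polydisc factorization and the last uses Theorem~\ref{Toeplitz- relation}. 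The converse direction runs the same argument backwards: given such $\theta_1, \theta_2$ on $\tria$, the functions $\eta_i = \theta_i \circ \varphi^{-1}$ are inner on $\mathbb D^n$ with disjoint variable dependence, so $T_{\phi \circ \varphi^{-1}, \mathbb D^n} = T_{\eta_1, \mathbb D^n}^* T_{\eta_2, \mathbb D^n}$ is a partial isometry by the polydisc result, whence $T_{\phi, \tria}$ is too.

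The main obstacle I anticipate is locating and correctly applying the precise polydisc characterization of partial isometric Toeplitz operators (the disjoint-variable inner function statement), as the one-variable Brown--Halmos-type result of partial isometries must be in its genuinely multivariable form; the disjointness-of-variables condition is subtle and is exactly what forces the $\Tilde{z}_j$ bookkeeping. Everything else is formal transport through $\Psi$, but I would take care that the variable-dependence correspondence under $\varphi$ is stated cleanly, since that is where an error could silently creep in.
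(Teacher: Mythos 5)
Your proposal is correct and follows essentially the same route as the paper: conjugate by the unitary $\Psi$ of Theorem~\ref{Toeplitz- relation}, invoke the polydisc characterization of partially isometric Toeplitz operators (the paper cites \cite[Theorem 1.1]{KDPS2022}), set $\theta_i=\eta_i\circ\varphi$, check innerness via Remark~\ref{IFR} and unimodularity on $\mathbb T^n$, and track the disjoint-variable condition through the coordinate change $z_j\mapsto \Tilde{z}_j$. Your direct unimodularity argument for why the $\theta_i$ are inner is in fact slightly cleaner than the paper's (which detours through unimodularity of $J_{\varphi}\cdot\phi_i\circ\varphi$), but the proof is the same in substance.
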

\begin{proof}
 Let $T_{\phi, \tria}$ be a partial isometry on $H^2(\tria).$ Then $\Psi T_{\phi, \tria}\Psi^{-1}$ is a partial isometry on $H^2(\mathbb D^n).$ It follows from \cite[Theorem 1.1]{KDPS2022} that there exist inner functions $\phi_1,\phi_2 \in H^{\infty}(\mathbb D^n)$ such that 
 $$\Psi T_{\phi, \tria}\Psi^{-1}=T_{\phi_1, \mathbb D^n}^*T_{\phi_2, \mathbb D^n}.$$
Consequently,
 \beq \label{Partial poly} T_{\phi, \tria}=\Psi^{-1}T_{\phi_1, \mathbb D^n}^*\Psi \Psi^{-1} T_{\phi_2, \mathbb D^n}\Psi.\eeq
Set $\theta_1=\phi_1 \circ \varphi$ and $\theta_2=\phi_2 \circ \varphi.$ Since $\Psi^{-1}(H^2(\mathbb D^n))=H^2(\tria),$ $J_{\varphi} \cdot \phi_1 \circ \varphi$ and  $J_{\varphi} \cdot \phi_2 \circ \varphi$ are unimodular on $\mathbb T^n.$ Combining Remark \ref{IFR} with the fact that $\mathscr M_{z_j}$ is a bounded linear operator on $H^2(\tria)$ for every $j=2, \ldots, n,$ we conclude that both $\theta_1$ and $\theta_2$ are inner functions on $\tria.$
Now, by Theorem~\ref{Toeplitz- relation} and \eqref{Partial poly}, we have 
\beqn T_{\phi, \tria}= T_{\phi_1 \circ \varphi, \tria}^*  T_{\phi_2 \circ \varphi, \tria}=T_{\theta_1, \tria}^*T_{\theta_2, \tria}.\eeqn
Since $\phi_1$ and $\phi_2$ depend on separate variables among $z_1, \ldots, z_n$ (see \cite[Theorem 1.1]{KDPS2022}), it follows that $\theta_1$ and $\theta_2$ depend on different variables among $\Tilde{z_1}, \ldots, \Tilde{z}_n.$

Conversely, let there exist inner functions $\theta_1, \theta_2 \in H^{\infty}(\tria)$ such that $$T_{\phi, \tria}=T_{\theta_1, \tria} ^*T_{\theta_2, \tria},$$
which is equivalent to
$$\Psi T_{\phi, \tria}\Psi^{-1}=\Psi T^*_{\theta_1, \tria}\Psi^{-1}\Psi T_{\theta_2, \tria}\Psi^{-1}.$$
Now, by Theorem~\ref{Toeplitz- relation},
$$T_{\phi \circ \varphi^{-1}, \mathbb D^n}=T_{\theta_1 \circ \varphi^{-1}, \mathbb D^n}^*T_{\theta_2 \circ \varphi^{-1}, \mathbb D^n}.$$
By Remark \ref{IFR}, $\theta_1 \circ \varphi^{-1}$ and $\theta_2 \circ \varphi^{-1}$ are inner functions on $\mathbb D^n.$ Since $\theta_1$ and $\theta_2$ depend on different variables among $\Tilde{z_1}, \ldots, \Tilde{z}_n,$ we get that $\theta_1 \circ \varphi^{-1}$ and $\theta_2 \circ \varphi^{-1}$ depend on different variables among $z_1, \ldots, z_n.$ Hence, by \cite[Theorem 1.1]{KDPS2022}, $T_{\phi, \tria}$ is a partial isometry. \end{proof}
%Now we define analytic and co-analytic Toeplitz operators and study some properties of these operators.
A Toeplitz operator $T_{\phi, \tria}$ on $H^2(\tria)$ is called an {\it analytic Toeplitz operator} if $\phi \in H^{\infty}(\tria).$ And it is called a {\it co-analytic Toeplitz operator} if $T^*_{\phi, \tria}$ is an analytic Toeplitz operator.

In the following proposition, we collect some basic properties of analytic Toeplitz operators on $H^2(\tria).$   
\begin{proposition}
    Let $T_{\phi, \tria}$ be a Toeplitz operator on $H^2(\tria)$ with the symbol $\phi \in L^{\infty}(\mathbb T^n).$ Then the following are equivalent $:$
    \begin{itemize}
        \item [$\mathrm{(i)}$] $T_{\phi, \tria}$ is an analytic Toeplitz operator.
        \item [$\mathrm{(ii)}$] For $j=1, \ldots, n,$ $T_{\phi, \tria}$ commutes with $\mathscr{M}_{z_j}.$
        \item [$\mathrm{(iii)}$]  For $j=1, \ldots, n,$ the range of $\mathscr{M}_{z_j}$ is invariant under $T_{\phi, \tria}.$
    \end{itemize}    
    
\end{proposition}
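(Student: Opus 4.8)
The plan is to prove the cyclic chain (i) $\Rightarrow$ (ii) $\Rightarrow$ (iii) $\Rightarrow$ (i), handling the two easy implications directly on $\tria$ and transferring the substantive one to the polydisc via Theorem~\ref{Toeplitz- relation}. For (i) $\Rightarrow$ (ii), I would first note that $\mathscr{M}_{z_j} = T_{z_j, \tria}$: since $z_j \in H^{\infty}(\tria)$, we have $z_j f \in H^2(\tria)$ for every $f \in H^2(\tria)$, so $P(z_j f) = z_j f$. Assuming $\phi \in H^{\infty}(\tria)$, both $\phi$ and $z_j$ are analytic, and Proposition~\ref{Injective}(ii) gives $T_{\phi, \tria} T_{z_j, \tria} = T_{\phi z_j, \tria} = T_{z_j, \tria} T_{\phi, \tria}$, so $T_{\phi, \tria}$ commutes with each $\mathscr{M}_{z_j}$. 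The implication (ii) $\Rightarrow$ (iii) is then immediate: if $g = \mathscr{M}_{z_j} h \in \operatorname{ran}(\mathscr{M}_{z_j})$, commutativity gives $T_{\phi, \tria} g = \mathscr{M}_{z_j}\big(T_{\phi, \tria} h\big) \in \operatorname{ran}(\mathscr{M}_{z_j})$.

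For the remaining implication (iii) $\Rightarrow$ (i), I would pass to $H^2(\mathbb D^n)$. Recall from \cite[Proposition~8.3]{CJP2023} (exactly as used in Corollary~\ref{compact}) that $\mathscr{M}_{z_j} = \Psi^{-1}\big(\prod_{l=j}^n M_l\big)\Psi$. Hence $\Psi(\operatorname{ran}\mathscr{M}_{z_j}) = \operatorname{ran}\big(\prod_{l=j}^n M_l\big) = w_j H^2(\mathbb D^n)$, where $w_j := z_j z_{j+1}\cdots z_n$, and writing $\psi := \phi\circ\varphi^{-1}$, Theorem~\ref{Toeplitz- relation} turns the invariance of $\operatorname{ran}(\mathscr{M}_{z_j})$ under $T_{\phi, \tria}$ into the invariance of $w_j H^2(\mathbb D^n)$ under $T_{\psi, \mathbb D^n}$, for every $j$.

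I would then extract Fourier coefficients. Writing $\psi = \sum_{\gamma\in\mathbb Z^n} c_\gamma z^\gamma$, testing the invariance on a monomial $z^\alpha \in w_j H^2(\mathbb D^n)$ and pairing against $z^\beta$ yields $\langle T_{\psi, \mathbb D^n} z^\alpha, z^\beta\rangle = c_{\beta-\alpha}$, which the invariance forces to vanish whenever $\beta \in \mathbb Z^n_+$ has some coordinate $\beta_l = 0$ with $l \in \{j, \ldots, n\}$. Letting $\alpha$ (in the support of $w_j$) and such $\beta$ range, for each fixed $l$ the difference $\gamma = \beta - \alpha$ sweeps out every tuple with $\gamma_l < 0$, so $c_\gamma = 0$ whenever $\gamma_l < 0$. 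Already the case $j = 1$ lets $l$ run over all of $\{1, \ldots, n\}$, whence $c_\gamma = 0$ unless $\gamma \in \mathbb Z^n_+$; that is, $\psi = \phi\circ\varphi^{-1} \in H^{\infty}(\mathbb D^n)$. By Remark~\ref{IFR} this is equivalent to $\phi \in H^{\infty}(\tria)$, which is (i).

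The main obstacle is precisely this last computation. Because $\Psi$ intertwines $\mathscr{M}_{z_j}$ with the \emph{product} $\prod_{l=j}^n M_l$ rather than a single coordinate multiplication, the transferred invariance is with respect to the ranges $w_j H^2(\mathbb D^n)$ of multiplication by the monomials $w_j$, so one cannot simply invoke a single-coordinate range-invariance criterion on the polydisc. The bookkeeping above is needed to check that these monomial ranges still pin down the negative-index coefficients of the symbol in every variable; it shows that they do, and in fact that the single monomial $w_1 = z_1\cdots z_n$ already suffices.
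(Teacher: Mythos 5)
Your proof is correct, but it closes the equivalence along a genuinely different path from the paper. The paper gets $\mathrm{(i)}\iff\mathrm{(ii)}$ in one stroke by citing the commutant description of $\{\mathscr{M}_{z_1},\ldots,\mathscr{M}_{z_n}\}$ from \cite[Theorem 6.1]{CJP2023}, and then proves $\mathrm{(ii)}\iff\mathrm{(iii)}$ directly: the forward direction is the same triviality you use, while for $\mathrm{(iii)}\Rightarrow\mathrm{(ii)}$ it writes $T_{\phi,\tria}\mathscr{M}_{z_j}f=\mathscr{M}_{z_j}h_f$ (using closedness of the range), applies $\mathscr{M}_{z_j}^*$, and invokes the Brown--Halmos identity $\mathscr{M}_{z_j}^*T_{\phi,\tria}\mathscr{M}_{z_j}=T_{\phi,\tria}$ of Theorem~\ref{Brown-Halmos} to conclude $h_f=T_{\phi,\tria}f$. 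You instead prove $\mathrm{(i)}\Rightarrow\mathrm{(ii)}$ elementarily from Proposition~\ref{Injective}$\mathrm{(ii)}$ (after justifying $\mathscr{M}_{z_j}=T_{z_j,\tria}$ via the boundedness of $\mathscr{M}_{z_j}$ from \cite[Proposition 4.2]{CJP2023}), and you close the cycle with $\mathrm{(iii)}\Rightarrow\mathrm{(i)}$ by conjugating with $\Psi$ and reading off the Fourier coefficients of $\phi\circ\varphi^{-1}$ against the monomial ideals $w_jH^2(\mathbb D^n)$, $w_j:=z_j\cdots z_n$; your sweep of $\gamma=\beta-\alpha$ over $\{\gamma:\gamma_l<0\}$ is correct. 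Both arguments are sound. The paper's route is shorter because Theorem~\ref{Brown-Halmos} and the cited commutant theorem do the heavy lifting; yours is more self-contained (the substantive direction needs neither of those results) and yields the extra information that invariance of $\operatorname{ran}(\mathscr{M}_{z_1})$ alone already forces analyticity of the symbol.
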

\begin{proof}The equivalence of statements $\mathrm{(i)}$ and $\mathrm{(ii)}$ follows from \cite[Theorem 6.1]{CJP2023}.

$\mathrm{(ii)} \iff \mathrm{(iii)}:$ Fix $j=1, \ldots, n.$ Let $T_{\phi, \tria}$ commute with $\mathscr{M}_{z_j},$ that is, \beqn T_{\phi, \tria}\mathscr{M}_{z_j}=\mathscr{M}_{z_j}T_{\phi, \tria}.\
\eeqn 
Now for any $f \in H^2(\tria),$ $T_{\phi, \tria}\mathscr{M}_{z_j}(f)=\mathscr{M}_{z_j}T_{\phi, \tria}(f)$ which is equivalent to  $T_{\phi, \tria}(\operatorname{ran}(\mathscr{M}_{z_j}) \subseteq \operatorname{ran}(\mathscr{M}_{z_j}).$ Hence, $\mathrm{(iii)}$ follows. 

Conversely, let the range of $\mathscr{M}_{z_j}$ is invariant under $T_{\phi, \tria}$ for each $j=1, \ldots, n.$ Fix $j=1, \ldots, n.$ Since $\operatorname{ran}(\mathscr{M}_{z_j})$ is closed, for any $f \in H^2(\tria),$ there exists $h_{f} \in  H^2(\tria)$ such that 
\beqn
T_{\phi, \tria}\mathscr{M}_{z_j}(f)=\mathscr{M}_{z_j}(h_f),
\eeqn
which is equivalent to 
\beqn
\mathscr{M}^*_{z_j}T_{\phi, \tria}\mathscr{M}_{z_j}(f)=h_f,
\eeqn
by Theorem~\ref{Brown-Halmos}, we have $T_{\phi, \tria}(f)=h_f.$ Hence, $T_{\phi, \tria}\mathscr{M}_{z_j}=\mathscr{M}_{z_j}T_{\phi, \tria}.$ This completes the proof. 
\end{proof}

Here is an immediate corollary of the above proposition.

\begin{corollary}
Let $T_{\phi, \tria}$ be a Toeplitz operator on $H^2(\tria)$ with the symbol $\phi \in L^{\infty}(\mathbb T^n).$ Then the following are equivalent $:$

 \begin{itemize}
       \item [$\mathrm{(i)}$] $T_{\phi, \tria}$ is a co-analytic Toeplitz operator.
       \item [$\mathrm{(ii)}$] For $j=1, \ldots, n,$ $T_{\phi, \tria}$ commutes with $\mathscr{M}^*_{z_j}.$
       \item [$\mathrm{(iii)}$]  For $j=1, \ldots, n,$ the range of $\mathscr{M}_{z_j}$ is invariant under $T^*_{\phi, \tria}.$
\end{itemize}
 \end{corollary}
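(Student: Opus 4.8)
The plan is to deduce this corollary from the preceding proposition by passing to the adjoint symbol, exploiting the formal duality between analyticity and co-analyticity. The crucial observation is that, by Proposition~\ref{Injective}$\mathrm{(i)}$, the adjoint satisfies $T^*_{\phi, \tria} = T_{\overline{\phi}, \tria}$, where $\overline{\phi} \in L^{\infty}(\mathbb T^n)$ as well. Hence, by the very definition of a co-analytic Toeplitz operator, $T_{\phi, \tria}$ is co-analytic if and only if the Toeplitz operator $T_{\overline{\phi}, \tria}$ is analytic. This reduces the entire corollary to applying the preceding proposition to the symbol $\overline{\phi}$ and then translating its three equivalent conditions back in terms of $T_{\phi, \tria}$.

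First I would apply the preceding proposition with $\overline{\phi}$ in place of $\phi$. This gives that $T_{\overline{\phi}, \tria}$ is analytic if and only if $T_{\overline{\phi}, \tria}$ commutes with $\mathscr{M}_{z_j}$ for every $j = 1, \ldots, n$, and equivalently if and only if $\operatorname{ran}(\mathscr{M}_{z_j})$ is invariant under $T_{\overline{\phi}, \tria}$ for every $j$. The equivalence of $\mathrm{(i)}$ and $\mathrm{(iii)}$ is then immediate: since $T_{\overline{\phi}, \tria} = T^*_{\phi, \tria}$, the invariance of $\operatorname{ran}(\mathscr{M}_{z_j})$ under $T_{\overline{\phi}, \tria}$ is verbatim the invariance under $T^*_{\phi, \tria}$ asserted in $\mathrm{(iii)}$, and $\mathrm{(i)}$ is equivalent to the analyticity of $T_{\overline{\phi}, \tria}$.

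For the equivalence of $\mathrm{(i)}$ and $\mathrm{(ii)}$, I would translate the commutation relation by taking adjoints. The condition that $T_{\overline{\phi}, \tria}$ commutes with $\mathscr{M}_{z_j}$, namely $T_{\overline{\phi}, \tria}\mathscr{M}_{z_j} = \mathscr{M}_{z_j}T_{\overline{\phi}, \tria}$, is equivalent, after taking the Hilbert-space adjoint of both sides, to $\mathscr{M}^*_{z_j}T_{\phi, \tria} = T_{\phi, \tria}\mathscr{M}^*_{z_j}$; here one uses once more that $T^*_{\overline{\phi}, \tria} = T_{\phi, \tria}$ by Proposition~\ref{Injective}$\mathrm{(i)}$. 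This is precisely the statement that $T_{\phi, \tria}$ commutes with $\mathscr{M}^*_{z_j}$, which is $\mathrm{(ii)}$.

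There is no substantive obstacle here, as the argument is a purely formal dualization of the preceding proposition. The only points demanding care are the correct invocation of Proposition~\ref{Injective}$\mathrm{(i)}$ to pass between $T_{\phi, \tria}$ and $T_{\overline{\phi}, \tria}$, and the bookkeeping in adjoining the commutation identity; since each $\mathscr{M}_{z_j}$ is a bounded operator on $H^2(\tria)$, these adjoints are well defined and the manipulations are justified.
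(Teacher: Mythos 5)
Your argument is correct and is exactly the dualization the paper has in mind when it calls this an ``immediate corollary'' of the preceding proposition (the paper supplies no proof): you pass to $\overline{\phi}$ via $T^*_{\phi,\triangle_n}=T_{\overline{\phi},\triangle_n}$ and take adjoints in the commutation identity. Nothing is missing.
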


In the following result, we characterize left invertible analytic Toeplitz operators on $H^2(\tria).$ 
\begin{proposition} \label{Left invertible}
    Let $\phi \in H^{\infty}(\tria).$ Then
    \begin{itemize}
    \item [$\mathrm{(i)}$] Either $\operatorname{ran}(T_{\phi, \tria})$ is dense in $H^2(\tria)$ or $\operatorname{ker}(T_{\phi, \tria}^*)$ is not finite dimensional.

    \item [$\mathrm{(ii)}$] $T_{\phi, \tria}$ is left invertible if and only if $\frac{1}{\phi} \in L^{\infty}(\mathbb T^n).$

        \item [$\mathrm{(iii)}$] $T_{\phi, \tria}$ is left invertible and $\operatorname{ker}(T_{\phi, \tria}^*)$ is not finite dimensional if and only if $\frac{1}{\phi} \in L^{\infty}(\mathbb T^n)$ but $\frac{1}{\phi} \notin H^{\infty}(\tria).$ 
    \end{itemize}
\end{proposition}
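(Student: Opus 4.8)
The plan is to push everything through the unitary equivalence of Theorem~\ref{Toeplitz- relation} and reduce all three statements to facts about an analytic multiplication operator on $H^2(\mathbb D^n)$. Write $\psi := \phi \circ \varphi^{-1}$; by Remark~\ref{IFR} we have $\psi \in H^\infty(\mathbb D^n)$, and Theorem~\ref{Toeplitz- relation} gives $T_{\phi,\tria} = \Psi^{-1}T_{\psi,\mathbb D^n}\Psi$ with $\Psi$ unitary, where $T_{\psi,\mathbb D^n}=M_\psi$ is multiplication by $\psi$ on $H^2(\mathbb D^n)$ (the Hankel term vanishes as $\psi$ is analytic). A unitary conjugation preserves left invertibility, invertibility, density of range, and the dimension of the cokernel, and $\Psi(\operatorname{ker} T_{\phi,\tria}^*) = \operatorname{ker} T_{\psi,\mathbb D^n}^*$; so each assertion may be verified for $M_\psi$. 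I will freely use the dictionary: since $\tfrac{1}{\phi}\circ\varphi^{-1} = \tfrac1\psi$, the equivalence recorded before Theorem~\ref{Toeplitz- relation} gives $\tfrac1\phi \in L^\infty(\mathbb T^n) \Leftrightarrow \tfrac1\psi \in L^\infty(\mathbb T^n)$, and the last line of Remark~\ref{IFR} gives $\tfrac1\phi \in H^\infty(\tria) \Leftrightarrow \tfrac1\psi \in H^\infty(\mathbb D^n)$. We may assume $\phi\not\equiv 0$, since otherwise $T_{\phi,\tria}=0$ and all three claims hold trivially.

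For (i), note $\operatorname{ran}(T_{\phi,\tria})$ is dense exactly when $\operatorname{ker}(T_{\phi,\tria}^*)=\{0\}$, so it suffices to show $N := \operatorname{ker} T_{\psi,\mathbb D^n}^* = (\psi H^2(\mathbb D^n))^\perp$ is either $\{0\}$ or infinite dimensional. Since $\overline{\psi H^2(\mathbb D^n)}$ is invariant under each coordinate shift $M_j$, the space $N$ is invariant under the commuting family $M_1^*, \ldots, M_n^*$. If $N$ were finite dimensional and nonzero, these commuting operators would share a common eigenvector $g \in N$; but a common eigenvector of $M_1^*, \ldots, M_n^*$ in $H^2(\mathbb D^n)$ is a scalar multiple of a Szeg\H{o} kernel $k_\lambda$ with $\lambda \in \mathbb D^n$, and $k_\lambda \in N$ forces $0 = \langle \psi f, k_\lambda\rangle = \psi(\lambda)f(\lambda)$ for all $f$, whence $\psi(\lambda)=0$. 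As $n\Ge 2$ and $\psi\not\equiv 0$, the zero set of $\psi$ in $\mathbb D^n$ is a nonempty analytic hypersurface, hence infinite; the same computation places $k_\mu \in N$ for every zero $\mu$, and distinct Szeg\H{o} kernels are linearly independent, so $N$ is infinite dimensional --- a contradiction. This is precisely where $n\Ge 2$ is essential and the behaviour departs from the one-variable case.

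For (ii), $M_\psi$ is left invertible iff it is bounded below. Since $\psi$ is analytic, the polydisc analogue of Proposition~\ref{Injective} gives $T_{\psi,\mathbb D^n}^* T_{\psi,\mathbb D^n} = T_{\overline\psi,\mathbb D^n}T_{\psi,\mathbb D^n} = T_{|\psi|^2,\mathbb D^n}$. Thus $M_\psi$ is bounded below iff $T_{|\psi|^2,\mathbb D^n} \Ge c^2 I$ for some $c>0$, i.e.\ iff $T_{|\psi|^2 - c^2,\mathbb D^n}\Ge 0$. Invoking the standard fact that a Toeplitz operator on $H^2(\mathbb D^n)$ with real symbol is positive iff its symbol is nonnegative a.e.\ (a consequence of the existence of outer functions on $\mathbb T^n$), this holds iff $|\psi|^2 \Ge c^2$ a.e.\ on $\mathbb T^n$, i.e.\ iff $\tfrac1\psi \in L^\infty(\mathbb T^n)$. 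Transporting back gives that $T_{\phi,\tria}$ is left invertible iff $\tfrac1\phi \in L^\infty(\mathbb T^n)$.

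For (iii), first observe that $M_\psi$ is invertible iff $\tfrac1\psi\in H^\infty(\mathbb D^n)$: if invertible then $h:=M_\psi^{-1}1$ satisfies $\psi h = 1$, so $\tfrac1\psi = h \in H^2(\mathbb D^n)$ has essentially bounded boundary values (by the left-invertibility hypothesis $\tfrac1\psi\in L^\infty(\mathbb T^n)$), and on the polydisc bounded boundary values force $h\in H^\infty(\mathbb D^n)$; the converse is clear as $M_{1/\psi}$ then inverts $M_\psi$. By the dictionary this reads: $T_{\phi,\tria}$ is invertible iff $\tfrac1\phi\in H^\infty(\tria)$. Now assume $T_{\phi,\tria}$ is left invertible, so by (ii) $\tfrac1\phi\in L^\infty(\mathbb T^n)$ and $\operatorname{ran}(T_{\phi,\tria})$ is closed; then $\operatorname{ker}(T_{\phi,\tria}^*)=\{0\}$ iff the range is all of $H^2(\tria)$ iff $T_{\phi,\tria}$ is invertible iff $\tfrac1\phi\in H^\infty(\tria)$. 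By (i) the cokernel, when nonzero, is infinite dimensional, so $\operatorname{ker}(T_{\phi,\tria}^*)$ is infinite dimensional iff it is nonzero iff $\tfrac1\phi\notin H^\infty(\tria)$. Combining, $T_{\phi,\tria}$ is left invertible with infinite-dimensional cokernel iff $\tfrac1\phi\in L^\infty(\mathbb T^n)\setminus H^\infty(\tria)$. I expect the main obstacle to be part (i): pinning down the common eigenvectors of $M_1^*,\dots,M_n^*$ as Szeg\H{o} kernels and then using that a nonconstant holomorphic function on $\mathbb D^n$ with $n\Ge 2$ has an infinite zero variety to rule out a finite-dimensional nonzero cokernel; the positivity characterization entering (ii) is the other nontrivial polydisc Hardy-space input.
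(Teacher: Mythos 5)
Your argument is correct, but for parts (i) and (ii) it takes a genuinely different route from the paper. For (i) the paper simply invokes \cite{AC1970}, whereas you reprove the dichotomy from scratch: you identify the common eigenvectors of the commuting backward shifts $M_1^*,\dots,M_n^*$ as Szeg\H{o} kernels, so a nonzero finite-dimensional cokernel would produce a point $\lambda\in\mathbb D^n$ with $\psi(\lambda)=0$, and then the non-isolation of zeros of holomorphic functions in dimension $n\Ge 2$ floods the cokernel with infinitely many independent kernels $k_\mu$. For (ii) the paper deduces the forward implication from closedness and shift-invariance of $(\phi\circ\varphi^{-1})H^2(\mathbb D^n)$ together with \cite{KS2017}, while you argue via $T^*_{\psi,\mathbb D^n}T_{\psi,\mathbb D^n}=T_{|\psi|^2,\mathbb D^n}$ and the equivalence $T_{u,\mathbb D^n}\Ge 0 \iff u\Ge 0$ a.e.; this also absorbs the paper's separate converse computation $T_{1/\phi,\tria}T_{\phi,\tria}=I$. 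Part (iii) is essentially the paper's argument ($1/\phi\in H^\infty(\tria)$ iff $\phi H^2(\tria)=H^2(\tria)$, then combine with (i) and (ii)), with the small extra care that an $H^2(\mathbb D^n)$ function with essentially bounded boundary values is bounded, which does hold on the polydisc via the Poisson integral (and, as the paper notes, fails on $\tria$ itself --- so it is right that you check it upstairs). Your approach buys a self-contained proof that makes visible where $n\Ge 2$ enters; the paper's buys brevity at the cost of two external citations. One caveat: your parenthetical justification of $T_{u,\mathbb D^n}\Ge 0\iff u\Ge 0$ a.e.\ ``by existence of outer functions on $\mathbb T^n$'' is shaky, since prescribing the modulus of an $H^2(\mathbb D^n)$ function is \emph{not} governed by log-integrability alone when $n\Ge 2$; the fact itself is nonetheless true and is better justified by noting that $\int_{\mathbb T^n}u|q|^2\,dm\Ge 0$ for all analytic polynomials $q$ extends to all trigonometric polynomials (multiply by monomials), and then testing against product Fej\'er kernels gives $u\Ge 0$ at a.e.\ Lebesgue point.
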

\begin{proof}
    $\mathrm{(i)}$ The proof follows from \cite[Theorem 3]{AC1970}.
    
    $\mathrm{(ii)}$ Let $T_{\phi, \tria}$ be left invertible. Then by Theorem~\ref{Toeplitz- relation}, $T_{\phi \circ \varphi^{-1}, \mathbb D^n}$ is left invertible and hence, $(\phi \circ \varphi^{-1})H^2(\mathbb D^n)$ is closed and invariant under $M_i$ for each $i=1, \ldots, n.$ Consequently, an application of \cite[Theorem 2]{KS2017} yields that $\phi \circ \varphi^{-1}$ is invertible in $L^{\infty}(\mathbb T^n).$ Since $\varphi^{-1}$ maps $\mathbb T^n$ onto $\mathbb T^n$ (see Remark \ref{Distinguishd boundary same}), $\phi$ is invertible in $L^{\infty}(\mathbb T^n).$

    Conversely, let $\frac{1}{\phi} \in L^{\infty}(\mathbb T^n).$ Then for $f \in H^2(\tria),$
    $$T_{\frac{1}{\phi}, \tria} T_{\phi, \tria}(f)=T_{\frac{1}{\phi}, \tria}(\phi f)=f.$$
    Hence, $T_{\phi, \tria}$ is left invertible.

    $\mathrm{(iii)}$ Note that $\frac{1}{\phi} \in H^{\infty}(\tria)$ if and only if $\phi H^2(\tria) =H^2(\tria).$ This combined with $\mathrm{(i)}$ and $\mathrm{(ii)}$ yields $\mathrm{(iii)}.$
     % Let $T_{\phi}$ is left invertible and $\operatorname{Ker}(T_{\phi}^*)$ is not finite dimensional. By $\mathrm{(i)}$ and \cite[Theorem 3]{AC1970}, we have $\phi$
\end{proof}
We conclude this section with a result contrasting from the classical Hardy space case.
\begin{example}
    The Coburn alternative states that at least one of $T_{\phi, \mathbb D}$ or $T^*_{\phi, \mathbb D}$ is injective for a nonzero function $\phi \in L^{\infty}(\mathbb T)$ (see \cite[Theorem~3.3.10]{MR2007}). In the case of $\tria,$ however, it fails. For instance, $T_{\phi, \tria}(1)=T_{\phi, \tria}^*(1)=0$ for $\phi=\overline{z_1}z^3_2.$ Indeed, the functions $\overline{z_1}z^3_2$ and $z_1\overline{z_2}^3$ are in the orthogonal complement of $H^2(\partial_d(\tria))$ in $L^2(\mathbb T^n).$ It is worth noting that for a nonzero holomorphic or anti-holomorphic symbol $\phi,$ Coburn alternative holds. \eof
\end{example}

{}

\end{document}